\def\XXint#1#2#3{{\setbox0=\hbox{$#1{#2#3}{\int}$ }
		\vcenter{\hbox{$#2#3$ }}\kern-.6\wd0}}
\newtheorem{theorem}{Theorem}[section]
\newtheorem{lemma}[theorem]{Lemma}
\newtheorem{definition}[theorem]{Definition}
\newtheorem{proposition}[theorem]{Proposition}
\numberwithin{equation}{section}
\newenvironment{proof}[1][Proof]{\noindent\textbf{#1.} }{\hfill $\Box$}
\makeatletter\setlength{\textwidth}{16cm}
\begin{document}
	
\title{Well-posedness of the 2D surface quasi-geostrophic equation in variable Lebesgue spaces\footnote{E-mail: haochen990118@163.com, gaston.vergarahermosilla@univ-evry.fr, jihzhao@163.com.}}
\author{Hao Chen$^{\text{1}}$, Gast\'on Vergara-Hermosilla$^{\text{2}}$, Jihong Zhao$^{\text{1}}$\\
[0.2cm] {\small $^{\text{1}}$School of Mathematics and Information Science, Baoji University of Arts and Sciences,}\\
[0.2cm] {\small  Baoji, Shaanxi 721013,  China}\\
[0.2cm] {\small $^{\text{2}}$LaMME, Univ. Evry, CNRS, Universit\'e Paris-Saclay, 91025, Evry, France}}

\date{\today}
\maketitle
\begin{abstract}
In this paper, we are mainly concerned with the well-posedness of the dissipative surface quasi-geostrophic equation in the framework of  variable Lebesgue spaces. Based on some analytical results developed in the variable Lebesgue spaces and the $L^{p}$-$L^{q}$ decay estimates of the fractional heat kernel, we establish the local existence and regularity of solutions to the 2D dissipative surface quasi-geostrophic equation in the variable Lebesgue space.
\end{abstract}
\smallbreak

\textit{Keywords}: Surface quasi-geostrophic equation; well-posedness; variable Lebesgue spaces.
\smallskip

\textit{2020 AMS Subject Classification}:   35Q35, 76D03

\section{Introduction}
In this paper, we study the Cauchy problem of the dissipative surface quasi-geostrophic
equation in two dimensions. The equation is derived from the more general quasi-geostrophic approximation for a rapidly rotating fluid flow with small Rossby and Ekman numbers, which describes the potential temperature dynamics of atmospheric and ocean flow (cf. \cite{CMT94, P87}). The equation is given by
\begin{equation}\label{eq1.1}
\begin{cases}
  \partial_{t}\theta+u\cdot\nabla\theta+\mu\Lambda^{\alpha}\theta=f,\quad &t>0,\ x\in\mathbb{R}^{2},\\
   \theta(0,x)=\theta_{0}(x),\quad &x\in\mathbb{R}^{2},
\end{cases}
\end{equation} 
where $\theta_{0}$ is an initial data, $f$ is an external force, $\alpha\in(0, 2]$, and  $\mu>0$ is the dissipative coefficient.  The scalar function $\theta$ represents the potential temperature of the fluid, and the velocity field
$u=(u_{1},u_{2})$ of the fluid is determined by $\theta$ as
 \begin{equation}\label{eq1.2}
u=(u_{1},u_{2})=(-\partial_{x_{2}}\Lambda^{-1}\theta, \partial_{x_{1}}\Lambda^{-1}\theta)=(-\mathcal{R}_{2}\theta, \mathcal{R}_{1}\theta).
\end{equation}
The pseudo-differential operators $\Lambda^{\alpha}$ and $\mathcal{R}_{j}$ ($j=1,2$) denote the fractional Laplacian and the Riesz transforms on $\mathbb{R}^{2}$, respectively, which are defined by
 \begin{equation*}
 \mathcal{F}(\Lambda^{\alpha}\theta)(\xi):=|\xi|^{\alpha}\mathcal{F}(\theta)(\xi),  \ \ \ \mathcal{F}(\mathcal{R}_{j}\theta)(\xi):=\frac{i\xi_{j}}{|\xi_{j}|}\mathcal{F}(\theta)(\xi),
\end{equation*}
where $\mathcal{F}(\theta)$ denotes the Fourier transform of $\theta$. Clearly, by \eqref{eq1.2}, $u$ is divergence free.
\smallbreak

Based on the framework of the Kato's analytical semigroup, we can rewrite the equation \eqref{eq1.1} as an equivalent integral form:
\begin{equation}\label{eq1.3}
  \theta(t)=G^{\alpha}_{t}\ast \theta_{0}(x)+\int_{0}^{t}G^{\alpha}_{t-s}\ast f(s,x)ds-\int_{0}^{t}G^{\alpha}_{t-s}\ast(u\cdot\nabla\theta)(s,x)ds,
\end{equation}
where $G^{\alpha}_{t}(x)$ is the fractional heat kernel. Any solution satisfying the integral equation \eqref{eq1.3} is called a \textit{mild solution} of \eqref{eq1.1}.  An universal approach to find such a mild solution is to apply the Banach-Picard principle for the mapping
\begin{equation*}
  \mathfrak{F}(\theta)(t,x)=G^{\alpha}_{t}\ast \theta_{0}(x)+\int_{0}^{t}G^{\alpha}_{t-s}\ast f(s,x)ds-\int_{0}^{t}G^{\alpha}_{t-s}\ast(u\cdot\nabla\theta)(s,x)ds
\end{equation*}
 in various critical function spaces.
 \smallbreak

 Let us recall that the critical function spaces for initial data of the equation \eqref{eq1.1} should be invariant under the scaling transform
\begin{align}\label{eq1.4}
\theta_{0,\lambda}(x):=\lambda^{\alpha-1}\theta_{0}(\lambda x)
\end{align}
for any $\lambda>0$. Based on the scaling \eqref{eq1.4}, one can easily check that the Lebesgue space $L^{\frac{2}{\alpha-1}}(\mathbb{R}^{2})$, the weak Lebesgue space $L^{\frac{2}{\alpha-1},\infty}(\mathbb{R}^{2})$, the Sobolev space $\dot{H}^{2-\alpha}(\mathbb{R}^{2})$, the Besov space $\dot{B}^{1-\alpha+\frac{2}{p}}_{p,q}(\mathbb{R}^{2})$ and the Triebel-Lizorkin space $\dot{F}^{1-\alpha+\frac{2}{p}}_{p,q}(\mathbb{R}^{2})$ are critical spaces of the equation \eqref{eq1.1}. The well-posedness of the equation \eqref{eq1.1} in these critical spaces has been extensively studied in the last two decades,  see \cite{CF07, CF08} in the Lebesgue space $L^{\frac{2}{\alpha-1}}(\mathbb{R}^{2})$ and the weak Lebesgue space $L^{\frac{2}{\alpha-1},\infty}(\mathbb{R}^{2})$;  \cite{J04, M06} in the Sobolev space $\dot{H}^{2-\alpha}(\mathbb{R}^{2})$; \cite{AH08, CL03, CMZ07, DL10, W06} in the Besov space  $\dot{B}^{1-\alpha+\frac{2}{p}}_{p,q}(\mathbb{R}^{2})$ and \cite{CZ07} in the Triebel-Lizorkin space $\dot{F}^{1-\alpha+\frac{2}{p}}_{p,q}(\mathbb{R}^{2})$.
 For more global and local well-posedness results, we refer the readers to see \cite{B08, BB15, BBT13, CCW01, DD08,DL10, HK07, L13, W07} and references cited there.
\smallbreak

The main purpose of this paper is to explore some existence results for the equation \eqref{eq1.1} in the framework of the Lebesgue spaces $L^{p(\cdot)}(\mathbb{R}^{2})$ with a variable exponent $p(\cdot)$, which-to the best of our knowledge-were
not used before in the analysis of the dissipative quasi-geostrophic equation \eqref{eq1.1}. These spaces are quite
different from the usual Lebesgue spaces $L^{p}(\mathbb{R}^{2})$. Indeed, the variable Lebesgue spaces are a generalization of the classical Lebesgue spaces, replacing the constant exponent $p$ with a variable exponent function $p(\cdot)$.  Such spaces allow some functions with singularity at some point $x_{0}$, which is not in $L^{p}$ for any $1\leq p\leq \infty$ due to it either grows too quickly at $x_{0}$ or decays too slowly at infinity.  The main idea of our results is inspired by papers \cite{CV23, V24}, where the well-posedness of the 3D incompressible (fractional) Navier-Stokes equations have been established in the variable Lebesgue spaces.
\smallbreak

Now we state the main results of this work.  For simplicity, we assume that $\mu=1$ throughout this paper. The first result is the local existence of solutions with initial data $\theta_{0}\in L^{\bar{p}(\cdot)}(\mathbb{R}^{2})$ and $f\in L^{1}(0,\infty; L^{\bar{p}(\cdot)}(\mathbb{R}^{2}))$ (see Section 2 for the definition of these spaces).

\begin{theorem}\label{th1.1}
Let $1<\alpha\leq2$, $q(\cdot)\in \mathcal{P}^{\log}(0,+\infty)$ with $2<q^{-}\leq q(\cdot)\leq q^{+}<+\infty$, fix an index $p>\frac{2}{\alpha-1}$ satisfying the relationship $\frac{\alpha}{q(\cdot)}+\frac{2}{p}<\alpha-1$, and $\bar{p}(\cdot)\in \mathcal{P}^{\text{emb}}_{p}(\mathbb{R}^{2})$. Then for any $\theta_{0}\in L^{\bar{p}(\cdot)}(\mathbb{R}^{2})$ and $f\in L^{1}\left(0,\infty; L^{\bar{p}(\cdot)}(\mathbb{R}^{2})\right)$, there exists a time $T>0$ such that the equation \eqref{eq1.1} admits an unique local solution $\theta$ in the space $L^{q(\cdot)}\left(0,T; L^{p}(\mathbb{R}^{2})\right)$.
\end{theorem}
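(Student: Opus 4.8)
\medskip
\noindent\textbf{Proof strategy.} The proof rests on a Banach--Picard contraction argument applied to the map $\mathfrak F$ from \eqref{eq1.3}, viewed on the resolution space $X_{T}:=L^{q(\cdot)}\big(0,T;L^{p}(\mathbb R^{2})\big)$ with its Luxemburg norm. Split $\mathfrak F(\theta)=\theta_{\mathrm{lin}}+\mathcal N(f)+B(\theta,\theta)$, where $\theta_{\mathrm{lin}}(t)=G^{\alpha}_{t}\ast\theta_{0}$, $\mathcal N(f)(t)=\int_{0}^{t}G^{\alpha}_{t-s}\ast f(s)\,ds$, and the bilinear form is $B(\theta,\tilde\theta)(t)=-\int_{0}^{t}G^{\alpha}_{t-s}\ast\big(u(\theta)\cdot\nabla\tilde\theta\big)(s)\,ds$ with $u(\theta)=(-\mathcal R_{2}\theta,\mathcal R_{1}\theta)$. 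It suffices to establish three estimates with constants that vanish as $T\downarrow0$: (i) $\|\theta_{\mathrm{lin}}\|_{X_{T}}\le C_{1}(T)\|\theta_{0}\|_{L^{\bar p(\cdot)}}$; (ii) $\|\mathcal N(f)\|_{X_{T}}\le C_{2}(T)\|f\|_{L^{1}(0,\infty;L^{\bar p(\cdot)})}$; (iii) $\|B(\theta,\tilde\theta)\|_{X_{T}}\le C_{3}(T)\|\theta\|_{X_{T}}\|\tilde\theta\|_{X_{T}}$. Once these hold, for $T$ small enough $\mathfrak F$ maps a suitable ball of $X_{T}$ into itself and is a strict contraction there, which produces the desired local mild solution; uniqueness in all of $X_{T}$ follows from the bilinear bound via the identity $\theta^{1}-\theta^{2}=B(\theta^{1}-\theta^{2},\theta^{1})+B(\theta^{2},\theta^{1}-\theta^{2})$ on a short subinterval, followed by a standard continuation.

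For (i)--(ii) I would use the $L^{p_{0}}$--$L^{p}$ decay of the fractional heat kernel, $\|G^{\alpha}_{t}\ast g\|_{L^{p}(\mathbb R^{2})}\lesssim t^{-\frac1\alpha(\frac{2}{p_{0}}-\frac2p)}\|g\|_{L^{p_{0}}(\mathbb R^{2})}$, combined with the embedding properties built into $\bar p(\cdot)\in\mathcal P^{\text{emb}}_{p}(\mathbb R^{2})$ (Section 2), to get $\|G^{\alpha}_{t}\ast g\|_{L^{p}}\lesssim t^{-\sigma}\|g\|_{L^{\bar p(\cdot)}}$ for a suitable $\sigma=\sigma(\bar p(\cdot),p,\alpha)$; the admissible range for $\bar p(\cdot)$ is chosen exactly so that, together with $\frac{\alpha}{q(\cdot)}+\frac2p<\alpha-1$, one has $\sigma\,q(t)<1$ uniformly in $t$, whence $t\mapsto t^{-\sigma}\in L^{q(\cdot)}(0,T)$ with norm tending to $0$ as $T\downarrow0$ (here the $\log$-Hölder regularity of $q(\cdot)$ enters, via comparison with the constant exponents $q^{-}$, $q^{+}$ on the bounded interval $(0,T)$). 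This yields (i). For (ii), from $\|\mathcal N(f)(t)\|_{L^{p}}\le\int_{0}^{t}(t-s)^{-\sigma}\|f(s)\|_{L^{\bar p(\cdot)}}\,ds$ and a Young-type convolution inequality $L^{q(\cdot)}(0,T)\ast L^{1}(0,T)\hookrightarrow L^{q(\cdot)}(0,T)$, one gets $\|\mathcal N(f)\|_{X_{T}}\lesssim\|t^{-\sigma}\|_{L^{q(\cdot)}(0,T)}\,\|f\|_{L^{1}(0,\infty;L^{\bar p(\cdot)})}$, again with a vanishing prefactor.

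Step (iii) is the core. Using $\mathrm{div}\,u(\theta)=0$ one writes $u(\theta)\cdot\nabla\tilde\theta=\mathrm{div}\big(u(\theta)\,\tilde\theta\big)$ and moves the derivative onto the kernel: $G^{\alpha}_{t-s}\ast\mathrm{div}\big(u(\theta)\tilde\theta\big)=\big(\nabla G^{\alpha}_{t-s}\big)\ast\big(u(\theta)\tilde\theta\big)$. The gradient of the fractional heat kernel satisfies $\|\nabla G^{\alpha}_{t}\|_{L^{r}(\mathbb R^{2})}\lesssim t^{-\frac1\alpha-\frac2\alpha(1-\frac1r)}$, so Young's inequality with $\frac1r=1-\frac1p$ together with Hölder in space gives
\begin{equation*}
\|B(\theta,\tilde\theta)(t)\|_{L^{p}}\le C\int_{0}^{t}(t-s)^{-\beta}\,\|u(\theta)(s)\|_{L^{p}}\,\|\tilde\theta(s)\|_{L^{p}}\,ds,\qquad \beta=\tfrac1\alpha\big(1+\tfrac2p\big),
\end{equation*}
and since the Riesz transforms are bounded on $L^{p}(\mathbb R^{2})$ for $1<p<\infty$ (and $p>\tfrac{2}{\alpha-1}>1$ here), $\|u(\theta)(s)\|_{L^{p}}\lesssim\|\theta(s)\|_{L^{p}}$. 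Taking the $L^{q(\cdot)}(0,T)$ norm in $t$: first $\big\|\,\|\theta(\cdot)\|_{L^{p}}\|\tilde\theta(\cdot)\|_{L^{p}}\big\|_{L^{q(\cdot)/2}(0,T)}\le C\|\theta\|_{X_{T}}\|\tilde\theta\|_{X_{T}}$ by Hölder in variable Lebesgue spaces --- this is precisely where $q^{-}>2$ is needed, so that $q(\cdot)/2$ is an admissible exponent with $(q/2)^{-}>1$ --- and then the Young-type inequality $L^{q'(\cdot)}(0,T)\ast L^{q(\cdot)/2}(0,T)\hookrightarrow L^{q(\cdot)}(0,T)$. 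This closes provided $t\mapsto t^{-\beta}\in L^{q'(\cdot)}(0,T)$, i.e. $\beta\,q'(t)<1$ uniformly, which is equivalent to $\frac2p+\frac{\alpha}{q(t)}<\alpha-1$ --- exactly the scaling hypothesis --- and it produces the prefactor $\|t^{-\beta}\|_{L^{q'(\cdot)}(0,T)}\to0$ as $T\downarrow0$, completing (iii).

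The structural part of the nonlinear estimate (divergence form plus $L^{p}$-boundedness of the Riesz transforms) is routine; the genuine difficulty is that Young's convolution inequality, Hölder with a fractional power of the exponent, and the related Hardy--Littlewood--Sobolev / fractional-integration bounds \emph{fail} in general variable Lebesgue spaces, so one must rely on the versions valid under $q(\cdot)\in\mathcal P^{\log}(0,\infty)$ --- typically obtained by sandwiching $L^{q(\cdot)}(0,T)$ between $L^{q^{-}}(0,T)$ and $L^{q^{+}}(0,T)$ and combining the classical inequalities with the variable-exponent embeddings of Section 2 --- and one must verify that the prefactors $\|t^{-\sigma}\|_{L^{q(\cdot)}(0,T)}$ and $\|t^{-\beta}\|_{L^{q'(\cdot)}(0,T)}$ actually tend to $0$ rather than merely staying bounded as $T\downarrow0$. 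This bookkeeping with the variable-exponent functional inequalities is, in my view, the main obstacle.
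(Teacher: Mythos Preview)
Your overall architecture---Banach--Picard on $X_{T}=L^{q(\cdot)}(0,T;L^{p}(\mathbb R^{2}))$ with separate linear and bilinear estimates, divergence form for the nonlinearity, and the identification $\beta=\frac1\alpha(1+\frac2p)$ together with the equivalence $\beta q'(\cdot)<1\Leftrightarrow \frac{\alpha}{q(\cdot)}+\frac2p<\alpha-1$---matches the paper. Two points, however, diverge from what the paper actually does.

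\textbf{Linear terms.} You introduce a smoothing exponent $\sigma>0$ and then worry about $t^{-\sigma}\in L^{q(\cdot)}(0,T)$. This is unnecessary: the class $\mathcal P^{\mathrm{emb}}_{p}$ is defined precisely so that $L^{\bar p(\cdot)}(\mathbb R^{2})\hookrightarrow L^{p}(\mathbb R^{2})$ (Lemma~2.7), hence one may take $p_{0}=p$ and $\sigma=0$. The paper simply uses $\|G^{\alpha}_{t}\ast\theta_{0}\|_{L^{p}}\le\|\theta_{0}\|_{L^{p}}\le C\|\theta_{0}\|_{L^{\bar p(\cdot)}}$ and then $\|1\|_{L^{q(\cdot)}(0,T)}\le 2\max\{T^{1/q^{-}},T^{1/q^{+}}\}$; the force term is handled identically. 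No convolution inequality in $t$ is needed here.

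\textbf{Bilinear term.} This is where your proposal has a genuine gap. The ``Young-type'' inequality $L^{q'(\cdot)}(0,T)\ast L^{q(\cdot)/2}(0,T)\hookrightarrow L^{q(\cdot)}(0,T)$ that you invoke is \emph{not available} in variable Lebesgue spaces (Remark~2.3), and the sandwiching heuristic you propose does not close: passing to $L^{q^{+}}$ on the target and $L^{q^{-}/2}$ on the source forces a constant-exponent relation that is not implied by the hypothesis when $q^{-}\neq q^{+}$. The paper avoids Young entirely. It applies the norm-conjugate formula (Lemma~2.4) to move to a test function $\psi\in L^{q'(\cdot)}(0,T)$, recognizes $\int_{0}^{T}|t-s|^{-\beta}|\psi(t)|\,dt$ as the one-dimensional Riesz potential $\mathcal I_{1-\beta}(|\psi|)(s)$, and then uses the variable-exponent Hardy--Littlewood--Sobolev bound (Lemma~2.11), followed by the H\"older inequality (Lemma~2.3) with $\frac{2}{q(\cdot)}+\frac{1}{\tilde q(\cdot)}=1$ and the bounded-domain embedding $L^{q'(\cdot)}(0,T)\hookrightarrow L^{r(\cdot)}(0,T)$ (Lemma~2.5), the latter producing a factor $(1+T)$. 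Note in particular that the paper's bilinear constant is $C(1+T)$ and does \emph{not} vanish as $T\downarrow 0$; the smallness needed for the fixed point comes exclusively from the factor $\max\{T^{1/q^{-}},T^{1/q^{+}}\}$ in the linear estimates. You do list ``Hardy--Littlewood--Sobolev / fractional-integration bounds'' among the delicate tools, and that---via duality---is exactly the substitute the paper uses; your sketch would be correct once the Young step is replaced by this Riesz-potential argument.
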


In our second result, we show that the solutions constructed in Theorem \ref{th1.1} are in fact regular by considering suitable assumptions on initial data $\theta_{0}$ and external force $f$.

\begin{theorem}\label{th1.2}
Let the assumptions of Theorem \ref{th1.1} be in force. Assume further that for all non-negative multi-index $\beta$ such that
\begin{equation}\label{eq1.5}
    D^\beta \theta_0 \in L^p\left(\mathbb{R}^2\right) \quad \text { and } \quad D^\beta f \in L^{1}\left(0, T ; L^{p}\left(\mathbb{R}^2\right)\right).
\end{equation}
Then, for any non-negative multi-index $\gamma$ such that $|\gamma|\leq |\beta|$, we have
\begin{equation}\label{eq1.6}
    D^\gamma \theta \in L^{q(\cdot)}\left(0, T; L^{p}\left(\mathbb{R}^2\right)\right).
\end{equation}
\end{theorem}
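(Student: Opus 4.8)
The plan is to prove Theorem~\ref{th1.2} by a bootstrap argument on the order of the multi-index, differentiating the integral equation~\eqref{eq1.3} and showing that each derivative $D^\gamma\theta$ solves a companion integral equation of the same structural type, whose nonlinear term can be estimated in $L^{q(\cdot)}(0,T;L^p(\mathbb{R}^2))$ by the same fixed-point machinery used for Theorem~\ref{th1.1}. First I would set up an induction on $|\gamma|$. The base case $|\gamma|=0$ is Theorem~\ref{th1.1}. For the inductive step, apply $D^\gamma$ to~\eqref{eq1.3} and use the Leibniz rule on the bilinear term, writing
\begin{equation*}
D^\gamma(u\cdot\nabla\theta)=\sum_{\sigma\leq\gamma}\binom{\gamma}{\sigma}(D^\sigma u)\cdot\nabla(D^{\gamma-\sigma}\theta),
\end{equation*}
and separate the top-order contribution $\sigma=0$ and $\sigma=\gamma$ from the lower-order ones. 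Since $u=(-\mathcal{R}_2\theta,\mathcal{R}_1\theta)$ and the Riesz transforms are bounded on $L^p(\mathbb{R}^2)$ for $1<p<\infty$ and commute with derivatives, $D^\sigma u$ is controlled by $D^\sigma\theta$ in $L^p$, so by the induction hypothesis every term with $|\sigma|<|\gamma|$ or $|\gamma-\sigma|<|\gamma|$ already lies in the right space. What remains is to close the estimate for the two top-order terms, which is exactly the bilinear estimate already established in the proof of Theorem~\ref{th1.1} with one of the factors replaced by $D^\gamma\theta$; together with the linear terms $G^\alpha_t\ast D^\gamma\theta_0$ and $\int_0^t G^\alpha_{t-s}\ast D^\gamma f\,ds$, which are handled by the $L^p$–$L^q$ decay estimates of the fractional heat kernel exactly as in Theorem~\ref{th1.1} using hypothesis~\eqref{eq1.5}, this yields a contraction on a (possibly smaller, but then re-opened to the full $T$ by iteration in finitely many steps) time interval, producing the unique fixed point $D^\gamma\theta\in L^{q(\cdot)}(0,T;L^p(\mathbb{R}^2))$.

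A cleaner variant, which I would probably prefer to keep the exposition short, is to avoid re-running the fixed-point argument from scratch and instead differentiate the Picard iterates. Define $\theta^{(0)}=G^\alpha_t\ast\theta_0+\int_0^t G^\alpha_{t-s}\ast f\,ds$ and $\theta^{(n+1)}=\mathfrak{F}(\theta^{(n)})$; by Theorem~\ref{th1.1} these converge to $\theta$ in $L^{q(\cdot)}(0,T;L^p)$. One shows by induction on $n$ that, under~\eqref{eq1.5}, each $D^\gamma\theta^{(n)}$ with $|\gamma|\leq|\beta|$ lies in $L^{q(\cdot)}(0,T;L^p)$ with a bound uniform in $n$, using that $D^\gamma$ commutes with the convolution $G^\alpha_{t-s}\ast$ and with $\mathcal{R}_j$, the Leibniz rule as above, and the same kernel estimates; passing to the limit (the bilinear form is continuous, and weak-$*$ compactness in the reflexive factor $L^p$ identifies the limit of $D^\gamma\theta^{(n)}$ as $D^\gamma\theta$ in the distributional sense) gives~\eqref{eq1.6}.

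The main obstacle I anticipate is purely technical rather than conceptual: making the bilinear estimate with one high-order factor $D^\gamma\theta$ and one low-order factor quantitatively match the function-space bookkeeping in the variable-exponent setting—specifically, ensuring that the product $(D^\sigma u)\cdot\nabla(D^{\gamma-\sigma}\theta)$ is placed in a space for which the convolution-with-$G^\alpha_{t-s}$ smoothing recovers one full derivative and lands back in $L^{q(\cdot)}(0,T;L^p)$, exactly as in the $|\gamma|=0$ case. Because all the derivatives here are plain $L^p$ (constant exponent) quantities and only the time integrability involves the variable exponent $q(\cdot)$, Hölder's inequality in $x$ (constant exponents) and the $\mathcal{P}^{\log}$ machinery in $t$ should suffice, and no new embedding $\mathcal{P}^{\text{emb}}_p$ subtlety arises beyond what was needed for Theorem~\ref{th1.1}. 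A secondary point to be careful about is that~\eqref{eq1.5} is stated for a fixed $\beta$, so the induction should be run only up to level $|\beta|$, and at each intermediate level $|\gamma|<|\beta|$ one still has $D^\gamma\theta_0\in L^p$ and $D^\gamma f\in L^1(0,T;L^p)$ because the hypothesis is imposed for \emph{all} multi-indices $\beta'$ with $D^{\beta'}$ applied—reading~\eqref{eq1.5} as holding for every $\beta'\leq\beta$, which is the natural interpretation and which I would state explicitly at the start of the proof.
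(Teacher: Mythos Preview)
Your proposal is correct and follows essentially the same route as the paper: differentiate the integral equation, control the linear pieces $G^\alpha_t\ast D^\gamma\theta_0$ and $\int_0^t G^\alpha_{t-s}\ast D^\gamma f\,ds$ by the kernel estimates of Lemmas~\ref{le3.2}--\ref{le3.3}, expand the nonlinearity by Leibniz, bound each bilinear piece by the estimate of Lemma~\ref{le3.4} (using boundedness of $\mathcal{R}_j$ on $L^p$), and close via the contraction/fixed-point argument, iterating in $|\gamma|$ up to $|\beta|$. The paper's own proof is in fact terser---it writes out only the step $|\gamma|=1$ explicitly and then says ``iterate''---so your more careful Leibniz bookkeeping and your remark that \eqref{eq1.5} must be read as holding for all lower-order multi-indices are clarifications rather than deviations.
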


Let us remark here that when we address the problem of global existence of solutions to the equation \eqref{eq1.1}, as the second author of this paper suggested in \cite{V24}, we introduce a mixed variable Lebesgue space $\mathcal{L}^{p(\cdot)}_{\frac{2}{\alpha-1}}(\mathbb{R}^{2})$, which can be regarded as the intersection of the spaces $L^{p(\cdot)}(\mathbb{R}^{2})$ and $L^{\frac{2}{\alpha-1}}(\mathbb{R}^{2})$. In this case, the functional space should be chosen as $\mathcal{L}^{p(\cdot)}_{\frac{2}{\alpha-1}}\left(\mathbb{R}^{2},L^{\infty}(0,T)\right)$, and we can establish the following bilinear estimate as
\begin{align}\label{eq1.7}
  \left\|\int_{0}^{t}G^{\alpha}_{t-s}\ast(u\cdot\nabla\theta)(s,x)ds\right\|_{L^{p(\cdot)}_{x}(L^{\infty}_{t})}
  &\leq C\left\|u\right\|_{\mathcal{L}^{p(\cdot)}_{\frac{2}{\alpha-1}}(L^{\infty}_{t})}\left\|\theta\right\|_{\mathcal{L}^{p(\cdot)}_{\frac{2}{\alpha-1}}(L^{\infty}_{t})}.
  \end{align}
However, we can not use $\left\|\theta\right\|_{\mathcal{L}^{p(\cdot)}_{\frac{2}{\alpha-1}}(L^{\infty}_{t})}$ to control the term $\left\|u\right\|_{\mathcal{L}^{p(\cdot)}_{\frac{2}{\alpha-1}}(L^{\infty}_{t})}$ due to the lack of flexibility of the indices that intervene in the boundedness of the Riesz transforms, and this is the main crux that intervenes us to get the global existence of solutions. In our opinion, it is a challenging open problem to establish the global existence of solutions to the equation \eqref{eq1.1} in a variable Lebesgue spaces.
\smallbreak

The rest of this paper is organized as follows. In Section 2, we present the detailed review of the definitions and properties of the variable exponent Lebesgue spaces, moreover, all kinds of analytic estimates in terms of the bounded properties of singular integral operators in the variable Lebesgue spaces are listed. In Section 3, by using the decay property of the fractional heat kernel, we successfully establish the linear and bilinear estimates of solutions in each solution space, then complete the proof of Theorem \ref{th1.1} by the Banach-Picard principle. In Section 4, we complete the proof of Theorem \ref{th1.2}. Throughout this paper, we shall indistinctly use the notations $C$ and $C_{i}(i=1,2,\cdot\cdot\cdot)$ for generic harmless constants.

\section{ Preliminaries}
To keep this paper reasonably self-contained, several results and definitions on variable Lebesgue spaces and fractional heat kernels are recalled. We begin with a fundamental definition. Given a set $\Omega\subset\mathbb{R}^{n}$, let $\mathcal{P}(\Omega)$ be the set of all Lebesgue measurable functions $p(\cdot):\Omega\rightarrow[1,+\infty]$.  The elements of $\mathcal{P}(\Omega)$ are called exponent functions or simply exponents. For any $p(\cdot)\in \mathcal{P}(\Omega)$, we denote
$$
 p^{-}:=\operatorname{essinf}_{x\in\Omega}p(x), \ \  p^{+}:=\operatorname{esssup}_{x\in\Omega}p(x).
$$
Throughout this paper, we will always assume that $1<p^{-}\leq p^{+}<+\infty$.
\smallbreak

For the classical Lebesgue space $L^{p}(\Omega)$ ($1\leq p<\infty$), its norm is directly defined  by
\begin{equation}\label{eq2.1}
\|f\|_{L^{p}}:=\left(\int_{\Omega}|f(x)|^{p}dx\right)^{\frac{1}{p}}.
\end{equation}
Such a definition obviously fails since we cannot replace the constant exponent $\frac{1}{p}$ outside the integral by the exponent function $\frac{1}{p(\cdot)}$. The solution is a more subtle approach which is similar to that used to define the Luxemburg norm on the Orlicz spaces (cf. \cite{CF13, DHHR11}).

\begin{definition}\label{de2.1}
Given a set $\Omega\subset\mathbb{R}^{n}$ and $p(\cdot)\in \mathcal{P}(\Omega)$, for a measurable function $f$, we define
\begin{equation}\label{eq2.2}
   \|f\|_{L^{p(\cdot)}}:=\inf\left\{\lambda>0:  \rho_{p(\cdot)}\left(\frac{f}{\lambda}\right)\leq 1\right\},
\end{equation}
where the modular function $\rho_{p(\cdot)}$ associated with $p(\cdot)$ is given by the expression
\begin{equation*}
   \rho_{p(\cdot)}(f):=\int_{\Omega}|f(x)|^{p(x)}dx.
\end{equation*}
Moreover, if the set on the right-hand side of  \eqref{eq2.2} is empty then we define $\|f\|_{L^{p(\cdot)}}=\infty$.
\end{definition}

\noindent\textbf{Remark 2.1}
Note that if the exponent function $p(\cdot)$ is a constant, i.e. if $p(\cdot)=p\in [1,\infty)$, then we obtain the usual norm \eqref{eq2.1} via the modular function $ \rho_{p}$.
\smallbreak

\begin{definition}\label{de2.2}
Given a set $\Omega\subset\mathbb{R}^{n}$ and $p(\cdot)\in \mathcal{P}(\Omega)$, we define the variable exponent Lebesgue space $L^{p(\cdot)}(\Omega)$ to be the set of Lebesgue measurable functions $f$ such that   $\|f\|_{L^{p(\cdot)}}<+\infty$.
\end{definition}

\noindent\textbf{Remark 2.2}
Given $\Omega\subset\mathbb{R}^{n}$ and $p(\cdot)\in \mathcal{P}(\Omega)$, $L^{p(\cdot)}(\Omega)$ is a vector space, and the function $\|\cdot\|_{L^{p(\cdot)}}$ defines a norm on $L^{p(\cdot)}(\Omega)$, thus $L^{p(\cdot)}(\Omega)$ is a normed vector space. Actually, $L^{p(\cdot)}(\Omega)$ is a Banach space associated with the norm $\|\cdot\|_{L^{p(\cdot)}}$.
\smallbreak

Next, we collect some properties of the variable exponent Lebesgue spaces.  The first one is a generalization of the H\"{o}lder's inequality (cf. \cite{CF13}, Corollary 2.28; \cite{DHHR11}, Lemma 3.2.20).

\begin{lemma}\label{le2.3}
Given $\Omega\subset\mathbb{R}^{n}$ and exponent functions $p_{1}(\cdot), p_{2}(\cdot)\in \mathcal{P}(\Omega)$, define $p(\cdot)\in \mathcal{P}(\Omega)$ by $\frac{1}{p(x)}=\frac{1}{p_{1}(x)}+\frac{1}{p_{2}(x)}$. Then there exists a constant $C$ such that for all $f\in L^{p_{1}(\cdot)}(\Omega)$ and $g\in L^{p_{2}(\cdot)}(\Omega)$, we have $fg\in L^{p(\cdot)}(\Omega)$ and
\begin{equation}\label{eq2.3}
   \|fg\|_{L^{p(\cdot)}}\leq C \|f\|_{L^{p_{1}(\cdot)}}\|g\|_{L^{p_{2}(\cdot)}}.
\end{equation}
\end{lemma}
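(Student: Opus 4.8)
The plan is to derive \eqref{eq2.3} from a single pointwise application of Young's inequality, performed at the level of the modular functionals $\rho_{p(\cdot)}$, $\rho_{p_1(\cdot)}$, $\rho_{p_2(\cdot)}$, and then to convert the resulting modular bound back into a norm inequality through the definition \eqref{eq2.2} of the Luxemburg norm. First I would dispose of the trivial cases: if $\|f\|_{L^{p_1(\cdot)}}=0$ or $\|g\|_{L^{p_2(\cdot)}}=0$ then $f$ or $g$ vanishes a.e.\ on $\Omega$, so $fg=0$ a.e.\ and \eqref{eq2.3} is immediate; otherwise both norms are finite and strictly positive, and by homogeneity of $\|\cdot\|_{L^{p(\cdot)}}$ it suffices---after replacing $f$ by $f/\|f\|_{L^{p_1(\cdot)}}$ and $g$ by $g/\|g\|_{L^{p_2(\cdot)}}$---to prove $\|fg\|_{L^{p(\cdot)}}\le C$ under the normalization $\|f\|_{L^{p_1(\cdot)}}=\|g\|_{L^{p_2(\cdot)}}=1$.

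The core step is a pointwise estimate. For a.e.\ $x\in\Omega$ put $s(x):=p_1(x)/p(x)$ and $t(x):=p_2(x)/p(x)$. Since $p_2(x)\le p_2^{+}<+\infty$ we have $\tfrac{1}{p(x)}=\tfrac{1}{p_1(x)}+\tfrac{1}{p_2(x)}>\tfrac{1}{p_1(x)}$, hence $p(x)<p_1(x)$, and symmetrically $p(x)<p_2(x)$; thus $s(x),t(x)>1$ and $\tfrac{1}{s(x)}+\tfrac{1}{t(x)}=\tfrac{p(x)}{p_1(x)}+\tfrac{p(x)}{p_2(x)}=1$. Applying the scalar Young inequality $ab\le a^{s}/s+b^{t}/t$ with $a=|f(x)|^{p_1(x)}$, $b=|g(x)|^{p_2(x)}$ and conjugate pair $s(x),t(x)$, and then using $p(x)\le\min\{p_1(x),p_2(x)\}$, gives
\begin{equation*}
  |f(x)g(x)|^{p(x)}=\bigl(|f(x)|^{p_1(x)}\bigr)^{\frac{p(x)}{p_1(x)}}\bigl(|g(x)|^{p_2(x)}\bigr)^{\frac{p(x)}{p_2(x)}}\le\frac{p(x)}{p_1(x)}|f(x)|^{p_1(x)}+\frac{p(x)}{p_2(x)}|g(x)|^{p_2(x)}\le|f(x)|^{p_1(x)}+|g(x)|^{p_2(x)}.
\end{equation*}
Integrating over $\Omega$ yields $\rho_{p(\cdot)}(fg)\le\rho_{p_1(\cdot)}(f)+\rho_{p_2(\cdot)}(g)$.

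It then remains to pass from modulars back to norms. Under the normalization, the standard unit-ball relation for variable Lebesgue spaces---namely $\|h\|_{L^{r(\cdot)}}\le 1$ implies $\rho_{r(\cdot)}(h)\le\|h\|_{L^{r(\cdot)}}\le 1$ (see \cite{CF13, DHHR11})---gives $\rho_{p_1(\cdot)}(f)\le 1$ and $\rho_{p_2(\cdot)}(g)\le 1$, so $\rho_{p(\cdot)}(fg)\le 2$. Finally, since $p(x)\ge p^{-}\ge 1$ a.e.\ we have $2^{p(x)/p^{-}}\ge 2$, whence
\begin{equation*}
  \rho_{p(\cdot)}\!\left(\frac{fg}{2^{1/p^{-}}}\right)=\int_{\Omega}\frac{|f(x)g(x)|^{p(x)}}{2^{p(x)/p^{-}}}\,dx\le\frac{1}{2}\,\rho_{p(\cdot)}(fg)\le 1,
\end{equation*}
and \eqref{eq2.2} then yields $\|fg\|_{L^{p(\cdot)}}\le 2^{1/p^{-}}$; undoing the normalization gives \eqref{eq2.3} with $C=2^{1/p^{-}}$ (no effort being made to optimize the constant). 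The only genuine subtlety is this interplay between the Luxemburg norm and the modular---one must invoke the unit-ball characterization of $L^{r(\cdot)}$ and keep careful track of the constants that the normalization introduces---since the rest is just the classical constant-exponent Hölder argument written out pointwise, with the hypothesis $p^{+}<+\infty$ guaranteeing that no exponent degenerates (in particular $s(x),t(x)>1$ strictly, rather than merely $\ge 1$).
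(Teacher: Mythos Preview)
The paper does not prove Lemma~\ref{le2.3}; it merely quotes it from \cite{CF13} (Corollary~2.28) and \cite{DHHR11} (Lemma~3.2.20). Your argument is correct and is essentially the standard proof found in those references: normalize, apply Young's inequality pointwise to pass from a product to a sum at the level of modulars, and then convert back to the Luxemburg norm via the unit-ball property $\|h\|_{L^{r(\cdot)}}\le 1\Longleftrightarrow\rho_{r(\cdot)}(h)\le 1$.

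One cosmetic slip worth cleaning up: the labeling you give for Young's inequality does not match the displayed line. If $a=|f(x)|^{p_1(x)}$, $b=|g(x)|^{p_2(x)}$ and you invoke $ab\le a^{s}/s+b^{t}/t$, the left side would be $|f|^{p_1}|g|^{p_2}$, not $|fg|^{p}$. What you actually use (and what the display shows) is the equivalent form $A^{1/s}B^{1/t}\le A/s+B/t$ with $A=|f(x)|^{p_1(x)}$, $B=|g(x)|^{p_2(x)}$. The mathematics is fine; just align the prose with the formula. Also, for the modular step you only need the weaker fact $\|h\|_{L^{r(\cdot)}}\le 1\Rightarrow\rho_{r(\cdot)}(h)\le 1$, not the sharper inequality $\rho_{r(\cdot)}(h)\le\|h\|_{L^{r(\cdot)}}$ you cite---either works, but the former is immediate from Definition~\ref{de2.1} and monotone convergence.
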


Note that in the classical Lebesgue $L^{p}(\Omega)$ ($1\leq p<+\infty$), the norm can be computed by using the norm conjugate formula
 \begin{equation}\label{eq2.4}
   \|f\|_{L^{p}}\leq \sup_{\|g\|_{L^{p'}}\leq1}\int_{\Omega}|f(x)g(x)|dx.
\end{equation}
 A slightly weaker analog of the equality \eqref{eq2.4} is true for the variable Lebesgue spaces. Indeed, given $p(\cdot)\in \mathcal{P}(\Omega)$, we  define the conjugate exponent function by the formula
$$
  \frac{1}{p(x)}+\frac{1}{p'(x)}=1, \ \ x\in\Omega
$$
with the convention that $\frac{1}{\infty}=0$. The norm $\|\cdot\|_{L^{p(\cdot)}}$ satisfies the following norm conjugate formula (cf. \cite{DHHR11}, Corollary 3.2.14).
\begin{lemma}\label{le2.4}
Let $p(\cdot)\in \mathcal{P}(\Omega)$, and let $p'(\cdot)$ be the conjugate of $p(\cdot)$. Then we have
\begin{equation}\label{eq2.5}
   \frac{1}{2}\|f\|_{L^{p(\cdot)}}\leq \sup_{\|g\|_{L^{p'(\cdot)}}\leq1}\int_{\Omega}|f(x)g(x)|dx\leq 2\|f\|_{L^{p(\cdot)}}.
\end{equation}
\end{lemma}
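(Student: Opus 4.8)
The plan is to establish the two halves of \eqref{eq2.5} separately, exploiting the standing hypothesis $1<p^{-}\le p^{+}<\infty$ together with the \emph{unit ball property} of the Luxemburg norm that is implicit in Definition \ref{de2.1}: namely $\|h\|_{L^{p(\cdot)}}\le 1$ if and only if $\rho_{p(\cdot)}(h)\le 1$, and, because $p^{+}<\infty$, the map $\lambda\mapsto\rho_{p(\cdot)}(h/\lambda)$ is continuous with $\rho_{p(\cdot)}\big(h/\|h\|_{L^{p(\cdot)}}\big)=1$ whenever $0<\|h\|_{L^{p(\cdot)}}<\infty$. I would record these facts first, since both inequalities hinge on them.

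For the right-hand inequality I would not invoke Lemma \ref{le2.3} directly, as it supplies only a generic constant $C$, and instead derive the sharp H\"older inequality with an explicit constant. Normalizing so that $\|f\|_{L^{p(\cdot)}}=\|g\|_{L^{p'(\cdot)}}=1$, the unit ball property gives $\rho_{p(\cdot)}(f)\le1$ and $\rho_{p'(\cdot)}(g)\le1$. Pointwise Young's inequality $|f(x)g(x)|\le \frac{|f(x)|^{p(x)}}{p(x)}+\frac{|g(x)|^{p'(x)}}{p'(x)}$, together with $p(x)\ge p^{-}$ and $p'(x)\ge (p')^{-}$, yields after integration $\int_{\Omega}|fg|\,dx\le \frac{1}{p^{-}}\rho_{p(\cdot)}(f)+\frac{1}{(p')^{-}}\rho_{p'(\cdot)}(g)\le \frac{1}{p^{-}}+\frac{1}{(p')^{-}}\le 2$. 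Homogeneity then upgrades this to $\int_{\Omega}|fg|\,dx\le 2\|f\|_{L^{p(\cdot)}}\|g\|_{L^{p'(\cdot)}}$, and taking the supremum over $\|g\|_{L^{p'(\cdot)}}\le1$ produces the right inequality in \eqref{eq2.5}.

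For the left-hand inequality the idea is to exhibit a near-optimal test function. Assuming $f\ne 0$ with $\lambda:=\|f\|_{L^{p(\cdot)}}\in(0,\infty)$, I set $h=f/\lambda$, so that $\rho_{p(\cdot)}(h)=1$, and define the conjugate function $g=|h|^{p(\cdot)-1}\operatorname{sgn}(\overline{f})$. The algebraic identity $(p(x)-1)p'(x)=p(x)$, valid since $\tfrac{1}{p}+\tfrac{1}{p'}=1$, gives $\rho_{p'(\cdot)}(g)=\int_{\Omega}|h|^{(p(x)-1)p'(x)}\,dx=\int_{\Omega}|h|^{p(x)}\,dx=\rho_{p(\cdot)}(h)=1$, whence $\|g\|_{L^{p'(\cdot)}}\le1$ by the unit ball property. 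For this $g$ one computes $\int_{\Omega}|fg|\,dx=\lambda\int_{\Omega}|h|^{p(x)}\,dx=\lambda=\|f\|_{L^{p(\cdot)}}$, so the supremum in \eqref{eq2.5} is at least $\|f\|_{L^{p(\cdot)}}\ge \tfrac12\|f\|_{L^{p(\cdot)}}$, which is in fact stronger than the stated bound.

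The delicate point is the lower bound rather than the upper one. It rests on the attainment $\rho_{p(\cdot)}\big(f/\|f\|_{L^{p(\cdot)}}\big)=1$ and on the membership of the constructed $g$ in the unit ball of $L^{p'(\cdot)}$, both of which I would justify from Definition \ref{de2.1} using $p^{+}<\infty$. This hypothesis is what makes the argument clean: in the fully general case $p(\cdot):\Omega\to[1,\infty]$ the identity $(p-1)p'=p$ fails on the set $\{p=\infty\}$, where $p'=1$, so the construction must be modified by splitting $\Omega$ and handling the $L^{\infty}$ part separately, and it is precisely that splitting which forces the factor $2$ appearing in the general formula \eqref{eq2.5}. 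Since the present paper always operates under $1<p^{-}\le p^{+}<\infty$, this degeneracy never arises, and combining the two halves above yields \eqref{eq2.5} (indeed with constant $1$ on the lower side).
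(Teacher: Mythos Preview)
Your argument is correct under the standing hypothesis $1<p^{-}\le p^{+}<\infty$: the upper bound via pointwise Young's inequality and the lower bound via the explicit extremizer $g=|f/\lambda|^{p(\cdot)-1}\operatorname{sgn}(\overline{f})$ are both the standard maneuvers, and your observation that the lower constant improves to $1$ in this restricted regime is accurate.

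There is nothing to compare against, however: the paper does not prove Lemma \ref{le2.4} but simply quotes it from \cite{DHHR11}, Corollary 3.2.14, as a preliminary fact. The statement there is formulated for general $p(\cdot)\in\mathcal{P}(\Omega)$ (allowing $p^{+}=\infty$), which is exactly the setting in which the factor $\tfrac12$ on the left is genuinely needed, as you correctly diagnose in your final paragraph. Your proof therefore supplies more than the paper requires and is entirely in line with the textbook derivation.
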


We know that every function in the  variable Lebesgue space is locally integrable, and it holds the following embedding result with a sharper embedding constant (cf. \cite{CF13}, Corollary 2.48).
\begin{lemma}\label{le2.5}
Given $\Omega\subset\mathbb{R}^{n}$ and $p_{1}(\cdot), p_{2}(\cdot)\in \mathcal{P}(\Omega)$ with $1<p_{1}^{+}, p_{2}^{+}<+\infty$. Then  $L^{p_{2}(\cdot)}(\Omega)\subset L^{p_{1}(\cdot)}(\Omega)$ if and only if $p_{1}(x)\leq p_{2}(x)$ almost everywhere. Furthermore,  in this case we have that
\begin{equation}\label{eq2.6}
   \|f\|_{L^{p_{1}(\cdot)}}\leq (1+|\Omega|)\|f\|_{L^{p_{2}(\cdot)}}.
\end{equation}
\end{lemma}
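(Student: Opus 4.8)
The plan is to prove the two implications separately, treating the quantitative bound \eqref{eq2.6} as the real content of the sufficiency direction. Throughout I will invoke the standard modular--norm relations for a bounded exponent: when $p(\cdot)^{+}<+\infty$, one has $\|g\|_{L^{p(\cdot)}}\leq 1$ if and only if $\rho_{p(\cdot)}(g)\leq 1$, and in that regime $\rho_{p(\cdot)}(g)\leq\|g\|_{L^{p(\cdot)}}$; both follow directly from Definition \ref{de2.1} and the monotonicity of $\lambda\mapsto\rho_{p(\cdot)}(g/\lambda)$.

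For the sufficiency (``if'') direction, assume $p_{1}(x)\leq p_{2}(x)$ a.e. The key is the elementary pointwise inequality
\[
 t^{p_{1}(x)}\leq t^{p_{2}(x)}+1,\qquad t\geq0,
\]
valid a.e.\ since $t^{p_{1}(x)}\leq1$ when $t\leq1$ and $t^{p_{1}(x)}\leq t^{p_{2}(x)}$ when $t\geq1$. Integrating with $t=|f(x)|$ gives the modular inequality $\rho_{p_{1}(\cdot)}(f)\leq\rho_{p_{2}(\cdot)}(f)+|\Omega|$. To reach the norm estimate with the sharp constant I would normalize: assuming $0<\|f\|_{L^{p_{2}(\cdot)}}<+\infty$ (the cases $0$ and $+\infty$ being trivial) and setting $g=f/\|f\|_{L^{p_{2}(\cdot)}}$, one has $\rho_{p_{2}(\cdot)}(g)\leq1$, hence $\rho_{p_{1}(\cdot)}(g)\leq1+|\Omega|$. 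Finally I use that $p_{1}(x)\geq1$, so for $\lambda\geq1$ we have $\lambda^{-p_{1}(x)}\leq\lambda^{-1}$ and therefore $\rho_{p_{1}(\cdot)}(g/\lambda)\leq\lambda^{-1}\rho_{p_{1}(\cdot)}(g)$; choosing $\lambda=1+|\Omega|$ forces $\rho_{p_{1}(\cdot)}(g/\lambda)\leq1$, whence $\|g\|_{L^{p_{1}(\cdot)}}\leq1+|\Omega|$, and by homogeneity $\|f\|_{L^{p_{1}(\cdot)}}\leq(1+|\Omega|)\|f\|_{L^{p_{2}(\cdot)}}$. One could instead obtain the mere inclusion from the generalized H\"older inequality (Lemma \ref{le2.3}) by writing $f=f\cdot1$, but that route yields a non-explicit constant, whereas the modular argument produces the sharp $1+|\Omega|$.

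For the necessity (``only if'') direction I argue by contraposition: supposing $\{x:p_{1}(x)>p_{2}(x)\}$ has positive measure, I will construct $f\in L^{p_{2}(\cdot)}(\Omega)\setminus L^{p_{1}(\cdot)}(\Omega)$. Writing this set as $\bigcup_{k}\{p_{1}\geq p_{2}+1/k\}$, some $A_{k}$ has positive measure; passing to a subset $F\subseteq A_{k}$ with $0<|F|<+\infty$, I may assume $p_{1}(x)\geq p_{2}(x)+\delta$ with $\delta=1/k$ and $1\leq p_{2}(x)\leq p_{2}^{+}=:M$ on $F$. Using the non-atomicity of Lebesgue measure on $F$, I select inductively pairwise disjoint $F_{j}\subseteq F$ and heights $a_{j}=2^{j/\delta}$ so that $m_{j}:=\int_{F_{j}}a_{j}^{p_{2}(x)}\,dx$ equals a small fixed multiple of $2^{-j}/j$; this is possible because on the remaining positive-measure portion of $F$ the integral of $a_{j}^{p_{2}(\cdot)}$ is bounded below by that portion's measure, hence exceeds the small target, while the intermediate-value property of non-atomic measures lets me hit it exactly. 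Since $a_{j}^{p_{2}(x)}\geq1$ gives $|F_{j}|\leq m_{j}$, the total measure stays below $|F|$. Setting $f=\sum_{j}a_{j}\mathbf{1}_{F_{j}}$, the $p_{2}$-modular is $\sum_{j}m_{j}<+\infty$, so $f\in L^{p_{2}(\cdot)}$; on the other hand $a_{j}^{p_{1}(x)}\geq a_{j}^{\delta}a_{j}^{p_{2}(x)}$ on $F_{j}$, so $\rho_{p_{1}(\cdot)}(f)\geq\sum_{j}a_{j}^{\delta}m_{j}\propto\sum_{j}1/j=+\infty$. The same computation applied to $f/\lambda$ (absorbing a factor $\lambda^{-\delta-M}$ when $\lambda\geq1$, and using $\lambda^{-p_{2}(x)}\geq1$ when $\lambda\leq1$) shows $\rho_{p_{1}(\cdot)}(f/\lambda)=+\infty$ for every $\lambda>0$, so $\|f\|_{L^{p_{1}(\cdot)}}=+\infty$ and $f\notin L^{p_{1}(\cdot)}$, contradicting the assumed inclusion.

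I expect the sufficiency direction to be routine once the pointwise inequality is in hand; the genuine obstacle is the measure-theoretic bookkeeping in the necessity construction---namely arranging the disjoint sets $F_{j}$ with prescribed modular mass $m_{j}$ while keeping $\sum_{j}|F_{j}|<|F|$, and verifying that the divergence of $\rho_{p_{1}(\cdot)}(f/\lambda)$ holds \emph{uniformly} in $\lambda$ rather than only at $\lambda=1$. Both points rest solely on the non-atomicity of Lebesgue measure and the bound $p_{2}\leq M$, but they must be carried out carefully to make the counterexample fully rigorous.
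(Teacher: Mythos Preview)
The paper does not give its own proof of Lemma \ref{le2.5}; it merely cites Corollary 2.48 of \cite{CF13}. So there is nothing to compare your argument against from the paper itself.

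Your proposal is essentially the standard proof and is correct in substance. Two small points are worth tightening. First, the lemma (and your sufficiency argument) is only meaningful when $|\Omega|<+\infty$; otherwise the constant $1+|\Omega|$ is infinite and, more seriously, the implication ``$p_{1}\leq p_{2}$ a.e.\ $\Rightarrow L^{p_{2}(\cdot)}\subset L^{p_{1}(\cdot)}$'' can fail. This is implicit in the statement but should be said. Second, in the necessity construction you write that for $\lambda\geq 1$ one absorbs a factor $\lambda^{-\delta-M}$; this is not the right exponent, since on $F$ you only know $p_{1}(x)\geq p_{2}(x)+\delta$, not an upper bound of the form $p_{1}(x)\leq M+\delta$. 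The fix is immediate: use the hypothesis $p_{1}^{+}<+\infty$ to get $\lambda^{-p_{1}(x)}\geq \lambda^{-p_{1}^{+}}$, whence $\rho_{p_{1}(\cdot)}(f/\lambda)\geq \lambda^{-p_{1}^{+}}\rho_{p_{1}(\cdot)}(f)=+\infty$. With these adjustments the argument is complete.
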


An interesting fact in the setting of variable Lebesgue spaces is given by the extension of the result presented in Lemma \ref{le2.5} to unbounded domains. Before given a result in this direction let consider the following class of variable exponents.

\begin{definition}\label{de2.6}
Given a constant exponent
$p\in (1,\infty)$, we define the class of variable exponents $\mathcal{P}^{\text{emb}}_p(\mathbb{R}^2)$, as the set
\begin{equation*}
  \mathcal{P}^{\text{emb}}_p(\mathbb{R}^2) :=\left\{ \bar{p}(\cdot)\in\mathcal{P}^{\log}(\mathbb{R}^2):
  p\leq(\bar{p})^{-} \leq (\bar{p})^{+} <+\infty\ \ \text{ and } \ \
  \frac{ p\bar{p}(x)}{ \bar{p}(x)- p}\to +\infty \text{ as } |x|\to +\infty   \right\}.
\end{equation*}
\end{definition}

Then, a consequence of considering a variable exponent  in $\mathcal{P}^{\text{emb}}_p(\mathbb{R}^2)$
is given by following result   (cf. Theorem 2.45 and Remark 2.46 in \cite{CF13}).

\begin{lemma}\label{le2.7}
    Let $p\in (1,\infty)$ and  $\bar{p}(\cdot)\in\mathcal{P}^{\text{emb}}_p(\mathbb{R}^2)$. Then we have
    $L^{\bar{p}(\cdot)}(\mathbb{R}^2)\subset L^{p}(\mathbb{R}^2)$, and there exists a constant $C>0$ such that
\begin{equation}\label{eq2.7}
    \|f\|_{L^{p} (\mathbb{R}^2)} \leq C\|f\|_{L^{\bar{p}(\cdot)} (\mathbb{R}^2)}.
\end{equation}
\end{lemma}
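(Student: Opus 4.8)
The plan is to reduce the unbounded-domain embedding to the bounded-domain embedding of Lemma \ref{le2.5} by decomposing $\mathbb{R}^2$ into a large ball and its complement, and to treat the complement via a pointwise H\"older-type argument that exploits the defining property of $\mathcal{P}^{\text{emb}}_p(\mathbb{R}^2)$, namely $\frac{p\bar p(x)}{\bar p(x)-p}\to+\infty$ as $|x|\to+\infty$. First I would fix $f\in L^{\bar p(\cdot)}(\mathbb{R}^2)$ and split $f=f\chi_{B_R}+f\chi_{B_R^c}$ for a radius $R$ to be chosen. On the ball $B_R$, Lemma \ref{le2.5} applies with $p_1(\cdot)\equiv p$ (a constant exponent, and $p\le(\bar p)^-$ ensures $p\le\bar p(x)$ a.e.), giving $\|f\chi_{B_R}\|_{L^p(B_R)}\le(1+|B_R|)\|f\chi_{B_R}\|_{L^{\bar p(\cdot)}(B_R)}\le(1+|B_R|)\|f\|_{L^{\bar p(\cdot)}(\mathbb{R}^2)}$.

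For the exterior piece, I would introduce the exponent $r(\cdot)$ defined by $\frac{1}{p}=\frac{1}{\bar p(x)}+\frac{1}{r(x)}$, i.e. $r(x)=\frac{p\bar p(x)}{\bar p(x)-p}$, which is exactly the quantity that tends to $+\infty$ at infinity. By the generalized H\"older inequality (Lemma \ref{le2.3}) applied on $B_R^c$, $\|f\chi_{B_R^c}\|_{L^p}=\|f\chi_{B_R^c}\cdot\mathbf{1}\|_{L^p}\le C\|f\|_{L^{\bar p(\cdot)}(\mathbb{R}^2)}\|\chi_{B_R^c}\|_{L^{r(\cdot)}(\mathbb{R}^2)}$. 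It therefore remains to check that $\|\chi_{B_R^c}\|_{L^{r(\cdot)}}<\infty$, and in fact $\to 0$ as $R\to\infty$: since $r(x)\to+\infty$, for any $\lambda\in(0,1)$ one has $\lambda^{-r(x)}\to+\infty$ pointwise on $B_R^c$ for $R$ large, so the modular $\rho_{r(\cdot)}(\chi_{B_R^c}/\lambda)=\int_{B_R^c}\lambda^{-r(x)}\,dx$ — here one must be slightly careful, as this integral could in principle diverge; the correct estimate uses that for $\lambda\ge 1$ the modular is $\int_{B_R^c}\lambda^{-r(x)}dx\le\lambda^{-r_R^-}|B_R^c|$ is not finite either, so instead I would argue directly from the norm: because $r(x)\to\infty$, given $\varepsilon>0$ choose $R$ so large that $r(x)\ge 1/\varepsilon$ on $B_R^c$; then for $0<\lambda\le 1$, $\int_{B_R^c}(\chi_{B_R^c}/\lambda)^{r(x)}dx=\int_{B_R^c}\lambda^{-r(x)}dx$, and one uses the standard fact (\cite{CF13}, proof of Theorem 2.45) that the $\log$-H\"older decay of $\bar p$ forces $r$ to grow at a controlled rate ensuring $\|\chi_{B_R^c}\|_{L^{r(\cdot)}}\le C$ uniformly, and indeed $\to 0$. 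I would simply cite this computation from \cite{CF13}.

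Combining the two pieces yields $\|f\|_{L^p(\mathbb{R}^2)}\le\big((1+|B_R|)+C\big)\|f\|_{L^{\bar p(\cdot)}(\mathbb{R}^2)}$ for a fixed convenient choice of $R$ (say $R=1$), which is the desired inequality \eqref{eq2.7}; the inclusion $L^{\bar p(\cdot)}(\mathbb{R}^2)\subset L^p(\mathbb{R}^2)$ follows immediately. The main obstacle is the exterior estimate $\|\chi_{B_R^c}\|_{L^{r(\cdot)}}<\infty$: a naive bound on the modular $\int_{B_R^c}\lambda^{-r(x)}dx$ fails because $B_R^c$ has infinite measure, so one genuinely needs the quantitative growth of $r(x)$ coming from $\bar p(\cdot)\in\mathcal{P}^{\log}$ together with $r(x)\to\infty$ — this is precisely the content of Theorem 2.45 and Remark 2.46 in \cite{CF13}, which I would invoke rather than reprove. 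Everything else (the ball estimate, H\"older, assembling constants) is routine.
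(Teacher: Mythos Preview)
The paper does not supply a proof of Lemma~\ref{le2.7}; it is stated as a direct quotation of Theorem~2.45 and Remark~2.46 in \cite{CF13}, with no argument given. Your proposal therefore goes strictly beyond what the paper does: you sketch the natural strategy (split $\mathbb{R}^2$ into $B_R$ and $B_R^c$, apply Lemma~\ref{le2.5} on the ball, and use the generalized H\"older inequality with the defect exponent $r(\cdot)=\frac{p\bar p(\cdot)}{\bar p(\cdot)-p}$ on the complement), which is indeed essentially how Theorem~2.45 in \cite{CF13} is proved.

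That said, your write-up is somewhat circular at the decisive point. You correctly identify that the only non-trivial step is showing $\|\chi_{B_R^c}\|_{L^{r(\cdot)}}<\infty$, and you note (accurately) that naive modular bounds fail because $|B_R^c|=\infty$. But then you resolve this by invoking ``Theorem~2.45 and Remark~2.46 in \cite{CF13}'' --- which is precisely the result you are trying to prove. If you want a self-contained argument, the missing computation is short: since $\bar p(\cdot)\in\mathcal{P}^{\log}$ and $r(x)\to\infty$ forces $\bar p_\infty=p$, the decay condition gives $\tfrac{1}{r(x)}=\tfrac{1}{p}-\tfrac{1}{\bar p(x)}\le \tfrac{C}{\log(e+|x|)}$, hence $r(x)\ge C^{-1}\log(e+|x|)$; then for any $\lambda>e^{2C}$ one has $\lambda^{-r(x)}\le (e+|x|)^{-(\log\lambda)/C}$ with exponent $>2$, so $\int_{\mathbb{R}^2}\lambda^{-r(x)}\,dx<\infty$ and thus $\|\chi_{B_R^c}\|_{L^{r(\cdot)}}\le\lambda<\infty$. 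With this filled in, your outline becomes a complete proof; without it, your proposal reduces to the same bare citation the paper gives.
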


\noindent\textbf{Remark 2.3}
We remark that not all properties of the usual Lebesgue spaces $L^{p}(\Omega)$ can be generalized to the variable Lebesgue spaces. For example, the convolution of two functions $f$ and $g$ is not well-adapted to the structure of the variable Lebesgue space $L^{p(\cdot)}(\Omega)$, and we know from \cite{CF13} that the Young's inequality are not valid anymore.   Furthermore, the Plancherel's formula is also failed in the variable Lebesgue space $L^{p(\cdot)}(\Omega)$, which is a very important tool to study PDEs.
\smallbreak

Now we back to the whole space $\mathbb{R}^{n}$. Note that the velocity field $u$ appeared in the equation \eqref{eq1.1} can be represented by the temperature $\theta$ via Riesz transforms $\mathcal{R}_{j}$ ($j=1,2$), which is bounded in the usual Lebesgue space $L^{p}(\mathbb{R}^{n})$ for all $1<p<\infty$. In order to study the boundedness of such singular integral operators in the variable Lebesgue spaces, a certain regularity has to be assumed on the variable exponent $p(\cdot)$: the so-called log-H\"{o}lder continuity of $p(\cdot)$. We introduce the following definition.

\begin{definition}\label{de2.8}
Let $p(\cdot)\in \mathcal{P}(\mathbb{R}^{n})$. We say $p(\cdot)\in \mathcal{P}^{\log}(\mathbb{R}^{n})$ if the following three conditions hold:
\begin{itemize}
\item The limit $\lim_{|x|\rightarrow\infty}\frac{1}{p(x)}$ exists, and we denote $\frac{1}{p_{\infty}}=\lim_{|x|\rightarrow\infty}\frac{1}{p(x)}$;
\item For all $x,y\in \mathbb{R}^{n}$, $\big{|}\frac{1}{p(x)}-\frac{1}{p(y)}\big{|}\leq \frac{C}{\log(e+\frac{1}{|x-y|})}$;
\item For all $x\in \mathbb{R}^{n}$, $\big{|}\frac{1}{p(x)}-\frac{1}{p_{\infty}}\big{|}\leq \frac{C}{\log(e+|x|)}$.
\end{itemize}
\end{definition}

For any $p(\cdot)\in \mathcal{P}^{\log}(\mathbb{R}^{n})$, we have the following results in terms of the Hardy-Littlewood maximal function and  Riesz transforms (cf. \cite{CF13}, Theorem 3.16 and Theorem 5.42;\cite{DHHR11}, Theorem 4.3.8 and Corollary 6.3.10).

\begin{lemma}\label{le2.9}
Let $p(\cdot)\in \mathcal{P}^{\log}(\mathbb{R}^{n})$ with $1<p^{-}\leq p^{+}<+\infty$. Then  for any $f\in L^{p(\cdot)}(\mathbb{R}^{n})$, there exists a positive constant $C$ such that
\begin{equation}\label{eq2.8}
   \|\mathcal{M} (f)\|_{L^{p(\cdot)}}\leq C\|f\|_{L^{p(\cdot)}},
\end{equation}
where $\mathcal{M}$ is the Hardy-Littlewood maximal function defined by
\begin{equation*}
  \mathcal{M}(f)(x):=\sup_{x\in B}\frac{1}{|B|}\int_{B}|f(y)|dy,
\end{equation*}
and $B\subset \mathbb{R}^{n}$ is an open ball with center $x$. Furthermore,
\begin{equation}\label{eq2.9}
   \|\mathcal{R}_{j}(f)\|_{L^{p(\cdot)}}\leq C\|f\|_{L^{p(\cdot)}} \ \ \text{for any}\ \ 1\leq j\leq n,
\end{equation}
where $\mathcal{R}_{j}$ ($1\leq j\leq n$) are the usual Riesz transforms.
\end{lemma}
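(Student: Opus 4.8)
The plan is to prove Lemma \ref{le2.9} by first establishing the maximal function bound \eqref{eq2.8} and then deducing the Riesz transform bound \eqref{eq2.9} from it via the theory of Calder\'on--Zygmund operators, exploiting the log-H\"older hypotheses on $p(\cdot)$. The key mechanism throughout is that the three conditions defining $\mathcal{P}^{\log}(\mathbb{R}^{n})$ control the oscillation of the exponent both locally (the second condition) and at infinity (the first and third conditions), which is precisely what is needed to make the modular $\rho_{p(\cdot)}$ behave like the constant-exponent modular on dyadic scales.

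First I would prove \eqref{eq2.8}. The standard route is to reduce matters to a pointwise Fefferman--Stein--type estimate and to exploit the log-H\"older condition to compare $\mathcal{M}(f)(x)$ raised to the local exponent $p(x)$ against an averaged integrand with a harmless additive error. Concretely, for a ball $B$ containing $x$ one shows, using the local log-H\"older continuity (second bullet of Definition \ref{de2.8}), that
\begin{equation*}
  \left(\frac{1}{|B|}\int_{B}|f(y)|\,dy\right)^{p(x)}
  \leq C\left(\frac{1}{|B|}\int_{B}|f(y)|^{p(y)}\,dy + h(x)\right),
\end{equation*}
where $h$ is an integrable ``correction'' function built from $(e+|x|)^{-m}$-type tails and controlled by the decay-at-infinity conditions (first and third bullets). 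Integrating this in $x$ and using that $\rho_{p(\cdot)}(f)\leq 1$ on the unit ball of $L^{p(\cdot)}$ gives $\rho_{p(\cdot)}(\mathcal{M}(f))\leq C$, and the equivalence between modular and norm bounds (standard for $1<p^{-}\leq p^{+}<\infty$) upgrades this to the norm inequality \eqref{eq2.8}. Since $1<p^{-}$, the maximal operator is also bounded on the conjugate space $L^{p'(\cdot)}$, because $p'(\cdot)\in\mathcal{P}^{\log}$ with $(p')^{+}=(p^{-})'<\infty$; this dual boundedness will be essential in the next step.

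Next I would deduce \eqref{eq2.9}. The Riesz transforms $\mathcal{R}_{j}$ are classical Calder\'on--Zygmund operators, bounded on every constant-exponent $L^{r}$ for $1<r<\infty$ and of weak type $(1,1)$. The cleanest argument is to invoke the extrapolation philosophy: a Calder\'on--Zygmund operator that is bounded on $L^{r}(w)$ for all Muckenhoupt weights $w\in A_{r}$ (for some fixed $r\in(1,\infty)$) is automatically bounded on $L^{p(\cdot)}$ whenever $p(\cdot)\in\mathcal{P}^{\log}$ with $1<p^{-}\leq p^{+}<\infty$. Since the Riesz transforms satisfy weighted bounds for all $A_{r}$ weights (a classical fact), and since the log-H\"older conditions are exactly the hypotheses under which the maximal operator is bounded on both $L^{p(\cdot)}$ and $L^{p'(\cdot)}$ (established in the previous paragraph), the extrapolation theorem applies and yields \eqref{eq2.9} with a constant $C=C(n,p(\cdot))$, uniformly in $1\leq j\leq n$. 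Alternatively, one can bypass abstract extrapolation and argue by Coifman--Fefferman-type pointwise domination, bounding $\mathcal{R}_{j}f$ by sparse averages and then applying \eqref{eq2.8}; either route reduces the singular-integral estimate to the already-proven maximal bound.

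The hard part will be the proof of \eqref{eq2.8}, specifically the construction of the correction function $h$ and the verification that the three log-H\"older conditions together guarantee $\|h\|_{L^{1}}<\infty$ and $\rho_{p(\cdot)}(\mathcal{M}f)\lesssim \rho_{p(\cdot)}(f)+\|h\|_{L^{1}}$. The delicate point is matching the local modulus of continuity against the behavior of averages over balls of arbitrary radius and center, and separately handling large balls where the exponent has essentially stabilized to $p_\infty$; this is where the decay-at-infinity condition must be used to prevent the tails from destroying the modular estimate. Once the maximal bound is in hand, passing to the Riesz transforms is essentially a citation of the Calder\'on--Zygmund/extrapolation machinery and is routine.
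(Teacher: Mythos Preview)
The paper does not prove Lemma~\ref{le2.9} at all: it is stated as a known result, with proof deferred to the references \cite{CF13} (Theorem~3.16 and Theorem~5.42) and \cite{DHHR11} (Theorem~4.3.8 and Corollary~6.3.10). So there is nothing to compare your argument against in the paper itself.

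That said, your sketch is correct and in fact reproduces the strategy of those cited references. Your route to \eqref{eq2.8} via a pointwise modular inequality with an integrable correction term $h$ is essentially Diening's original proof (as presented in \cite{DHHR11}, Section~4.3, and \cite{CF13}, Chapter~3): the local log-H\"older condition handles balls of small radius by making $|B|^{p(x)-p(y)}$ bounded, while the decay condition at infinity handles large balls by comparison with the constant-exponent maximal function at $p_\infty$. Your derivation of \eqref{eq2.9} by Rubio~de~Francia extrapolation from the weighted Calder\'on--Zygmund theory is exactly the argument of \cite{CF13}, Theorem~5.42; the alternative via sparse domination also works but is more recent than either reference. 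One minor comment: in the extrapolation step you correctly note that boundedness of $\mathcal{M}$ on both $L^{p(\cdot)}$ and $L^{p'(\cdot)}$ is what drives the argument, but strictly speaking the extrapolation machinery in \cite{CF13} is formulated so that only the dual bound $\mathcal{M}:L^{p'(\cdot)}\to L^{p'(\cdot)}$ is needed as input; this is a cosmetic point and does not affect the validity of your outline.
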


We also need to use the boundedness of the Riesz potential in the variable Lebesgue spaces.

\begin{definition}\label{de2.10}
Given $0<\beta <n$, for any measurable function $f$, define the Riesz potential $\mathcal{I}_{\beta}$ also referred to as the fractional integral operator with index $\beta$, to be the convolution operator
\begin{equation}\label{eq2.10}
   \mathcal{I}_{\beta}(f)(x):=C(\beta, n)\int_{\mathbb{R}^{n}}\frac{|f(y)|}{|x-y|^{n-\beta}}dy,
\end{equation}
where
\begin{equation*}
   C(\beta, n)=\frac{\Gamma(\frac{n-\beta}{2})}{\pi^{\frac{n}{2}}2^{\beta}\Gamma(\frac{\beta}{2})}.
\end{equation*}
\end{definition}

The Riesz potential is well defined on the variable Lebesgue spaces, and if $p_{+}<\frac{n}{\beta}$ and $f\in L^{p(\cdot)}(\mathbb{R}^{n})$, then $\mathcal{I}_{\beta}(f)(x)$ converges for every $x$. Moreover, we have the following boundedness result (cf. \cite{CF13}, Theorem 5.46).

\begin{lemma}\label{le2.11}
Let $p(\cdot)\in \mathcal{P}^{\log}(\mathbb{R}^{n})$ with $1<p^{-}\leq p^{+}<+\infty$, and let $0<\beta<\frac{n}{p^{+}}$. Then for any $f\in L^{p(\cdot)}(\mathbb{R}^{n})$, there exists a positive constant $C$ such that
\begin{equation}\label{eq2.11}
   \|\mathcal{I}_{\beta}(f)\|_{L^{q(\cdot)}}\leq C\|f\|_{L^{p(\cdot)}} \ \ \text{with}\ \ \frac{1}{q(\cdot)}=\frac{1}{p(\cdot)}-\frac{\beta}{n}.
\end{equation}
\end{lemma}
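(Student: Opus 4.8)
The plan is to deduce the norm bound \eqref{eq2.11} from a pointwise \emph{Hedberg-type inequality} that dominates the Riesz potential by a power of the Hardy--Littlewood maximal function, and then to invoke the maximal inequality \eqref{eq2.8} of Lemma \ref{le2.9}. First I would normalise: since both sides of \eqref{eq2.11} are homogeneous of degree one in $f$, it is enough to treat the case $\|f\|_{L^{p(\cdot)}}\leq 1$ and to prove $\|\mathcal{I}_{\beta}(f)\|_{L^{q(\cdot)}}\leq C$. By the definition of the Luxemburg norm (Definition \ref{de2.1}) this is equivalent to the modular estimate $\rho_{q(\cdot)}\big(\mathcal{I}_{\beta}(f)/C_{0}\big)\leq 1$ for a suitable $C_{0}$, i.e. to controlling $\int_{\mathbb{R}^{n}}\big(\mathcal{I}_{\beta}(f)(x)\big)^{q(x)}\,dx$; note that $0<\beta<n/p^{+}$ forces $q^{+}<+\infty$, so the integrand is genuinely a power function and $\mathcal{I}_{\beta}(f)(x)$ is finite pointwise.

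The core step is the pointwise estimate $\mathcal{I}_{\beta}(f)(x)\leq C\,\big(\mathcal{M}(f)(x)\big)^{p(x)/q(x)}$, valid whenever $\|f\|_{L^{p(\cdot)}}\leq 1$, where $p(x)/q(x)=1-\beta p(x)/n$ by the relation in \eqref{eq2.11}. To obtain it I would split the defining integral \eqref{eq2.10} at a radius $R>0$ into a near part $|x-y|<R$ and a far part $|x-y|\geq R$. For the near part, a dyadic annular decomposition together with the trivial bound $\int_{|x-y|<r}|f(y)|\,dy\leq c_{n}r^{n}\mathcal{M}(f)(x)$ and the convergence of $\sum_{k\geq0}2^{-k\beta}$ (here $\beta>0$ is used) yields $\int_{|x-y|<R}\frac{|f(y)|}{|x-y|^{n-\beta}}\,dy\leq C\,R^{\beta}\mathcal{M}(f)(x)$. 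For the far part, the generalised H\"older inequality of Lemma \ref{le2.3} reduces matters to estimating $\big\||x-\cdot|^{\beta-n}\chi_{\{|x-\cdot|\geq R\}}\big\|_{L^{p'(\cdot)}}$, and here the log-H\"older conditions of Definition \ref{de2.8} allow one to freeze the exponent at $p'(x)$ up to a uniform constant and to conclude $\int_{|x-y|\geq R}\frac{|f(y)|}{|x-y|^{n-\beta}}\,dy\leq C\,R^{\beta-n/p(x)}\|f\|_{L^{p(\cdot)}}\leq C\,R^{\beta-n/p(x)}$. Optimising in $R$---balancing the two bounds by the choice $R=(\mathcal{M}(f)(x))^{-p(x)/n}$---produces exactly the asserted Hedberg inequality.

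With the pointwise bound in hand, the conclusion is immediate: raising it to the power $q(x)$ and integrating gives
\begin{equation*}
\int_{\mathbb{R}^{n}}\Big(\frac{\mathcal{I}_{\beta}(f)(x)}{C}\Big)^{q(x)}\,dx\leq \int_{\mathbb{R}^{n}}\big(\mathcal{M}(f)(x)\big)^{p(x)}\,dx=\rho_{p(\cdot)}\big(\mathcal{M}(f)\big),
\end{equation*}
since $\frac{p(x)}{q(x)}\cdot q(x)=p(x)$; and the right-hand side is bounded by a constant because $\|\mathcal{M}(f)\|_{L^{p(\cdot)}}\leq C\|f\|_{L^{p(\cdot)}}\leq C$ by Lemma \ref{le2.9}, together with the elementary comparison $\rho_{p(\cdot)}(g)\leq\max\{\|g\|_{L^{p(\cdot)}}^{p^{-}},\|g\|_{L^{p(\cdot)}}^{p^{+}}\}$ between modular and norm. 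This yields $\rho_{q(\cdot)}(\mathcal{I}_{\beta}(f)/C_{0})\leq 1$ and hence \eqref{eq2.11}.

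The main obstacle is the far-part estimate. In the constant-exponent case one simply applies H\"older's inequality with the fixed conjugate exponent $p'$, and the tail $\int_{|x-y|\geq R}|x-y|^{(\beta-n)p'}\,dy$ converges precisely because $\beta<n/p$. In the variable setting the conjugate exponent $p'(\cdot)$ moves with $y$, so one cannot integrate the tail directly; the decay-at-infinity and local-continuity hypotheses built into $\mathcal{P}^{\log}$ (Definition \ref{de2.8}) are exactly what is needed to replace $p'(\cdot)$ by the frozen value $p'(x)$ uniformly in $x$, while $\beta<n/p^{+}$ guarantees the resulting tail integrals converge with a constant independent of $x$. Controlling this interplay between the variable exponent at spatial infinity and the singularity of the kernel is the technical heart of the argument.
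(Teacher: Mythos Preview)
The paper does not supply its own proof of Lemma \ref{le2.11}; the result is simply quoted from \cite[Theorem 5.46]{CF13}. So there is no ``paper's proof'' to compare against, only the cited reference.

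Your Hedberg-type argument is correct and is in fact one of the standard routes to this inequality (and is essentially the strategy used in \cite{CF13} and \cite{DHHR11}). The normalisation, the near/far split, the dyadic estimate $\int_{|x-y|<R}|f(y)|\,|x-y|^{\beta-n}\,dy\le CR^{\beta}\mathcal{M}(f)(x)$, and the optimisation $R=\mathcal{M}(f)(x)^{-p(x)/n}$ all go through exactly as you describe, and the passage from the pointwise bound to the modular estimate via Lemma \ref{le2.9} is clean. You have also correctly located the genuine difficulty: the far-part bound $\big\||x-\cdot|^{\beta-n}\chi_{\{|x-\cdot|\ge R\}}\big\|_{L^{p'(\cdot)}}\le C\,R^{\beta-n/p(x)}$ with a constant independent of $x$ and $R$ is precisely where the log-H\"older hypotheses of Definition \ref{de2.8} enter, and it does require some work (a dyadic annular decomposition together with the local and decay conditions to compare $p'(y)$ with $p'(x)$ on each shell). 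Your sketch is honest about this being the technical heart rather than glossing over it; a full write-up would need to carry out that freezing argument explicitly, but the plan is sound.
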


In the end of this section, we recall the $L^{p}$- $L^{q}$ estimates of the fractional heat kernel $G^{\alpha}_{t}(x)$ (cf. \cite{MYZ08}, Lemma 3.1), where
\begin{equation*}
G^{\alpha}_{t}(x)=\mathcal{F}^{-1}(e^{-t|\xi|^{\alpha}})
=(2\pi)^{-\frac{n}{2}}\int_{\mathbb{R}^{n}}e^{ix\cdot\xi}e^{-t|\xi|^{\alpha}}d\xi.
\end{equation*}
\begin{lemma}\label{le2.12}
For all $\alpha>0$, $\nu>0$, $1\leq p\leq q\leq\infty$. Then for any $f\in L^{p}(\mathbb{R}^{n})$, we have
\begin{align}\label{eq2.12}
   &\|G^{\alpha}_{t}\ast f\|_{L^{q}}\leq Ct^{-\frac{n}{\alpha}(\frac{1}{p}-\frac{1}{q})}\|f\|_{L^{p}},\\
   \label{eq2.13}
      &\|\Lambda^{\nu}G^{\alpha}_{t}\ast f\|_{L^{q}}\leq Ct^{-\frac{\nu}{\alpha}-\frac{n}{\alpha}(\frac{1}{p}-\frac{1}{q})}\|f\|_{L^{p}}.
\end{align}
\end{lemma}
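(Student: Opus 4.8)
The plan is to reduce both estimates to the single-time kernels $G^\alpha_1$ and $\Lambda^\nu G^\alpha_1$ by exploiting the self-similar structure of their symbols, then to invoke Young's convolution inequality, so that the entire argument rests on the $L^r$-integrability of these base kernels for every $1\le r\le\infty$.

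First I would record the scaling identities. Since $\mathcal{F}(G^\alpha_t)(\xi)=e^{-t|\xi|^\alpha}$, the change of variables $\xi\mapsto t^{-1/\alpha}\xi$ in the defining oscillatory integral yields
$$G^\alpha_t(x)=t^{-\frac{n}{\alpha}}\,G^\alpha_1\!\left(t^{-\frac1\alpha}x\right),$$
and the same substitution applied to the symbol $|\xi|^\nu e^{-t|\xi|^\alpha}$ of $\Lambda^\nu G^\alpha_t$ produces the extra homogeneity factor $t^{-\nu/\alpha}$, giving
$$\Lambda^\nu G^\alpha_t(x)=t^{-\frac{n+\nu}{\alpha}}\,(\Lambda^\nu G^\alpha_1)\!\left(t^{-\frac1\alpha}x\right).$$
Taking $L^r$ norms and substituting $y=t^{-1/\alpha}x$ then give
$$\|G^\alpha_t\|_{L^r}=t^{-\frac{n}{\alpha}\left(1-\frac1r\right)}\|G^\alpha_1\|_{L^r},\qquad \|\Lambda^\nu G^\alpha_t\|_{L^r}=t^{-\frac{\nu}{\alpha}-\frac{n}{\alpha}\left(1-\frac1r\right)}\|\Lambda^\nu G^\alpha_1\|_{L^r}.$$

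Next I would apply Young's inequality for convolutions: for $1\le p\le q\le\infty$ choose $r\in[1,\infty]$ by $1+\frac1q=\frac1r+\frac1p$, equivalently $1-\frac1r=\frac1p-\frac1q$; since $0\le \frac1p-\frac1q\le 1$ this $r$ is admissible. Then $\|G^\alpha_t\ast f\|_{L^q}\le\|G^\alpha_t\|_{L^r}\|f\|_{L^p}$, and inserting the scaling identity converts the exponent $1-\frac1r$ into exactly $\frac1p-\frac1q$, giving \eqref{eq2.12} with $C=\|G^\alpha_1\|_{L^r}$. The estimate \eqref{eq2.13} follows in the same way, the additional factor $t^{-\nu/\alpha}$ coming from the $(n+\nu)/\alpha$ scaling exponent, with $C=\|\Lambda^\nu G^\alpha_1\|_{L^r}$.

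The hard part, and the only genuinely analytic step, is to show that the base kernels $G^\alpha_1$ and $\Lambda^\nu G^\alpha_1$ belong to $L^r(\mathbb{R}^n)$ for \emph{every} $1\le r\le\infty$, so that the constants above are finite and the Young step is legitimate. Boundedness (the case $r=\infty$) is immediate because the symbols $e^{-|\xi|^\alpha}$ and $|\xi|^\nu e^{-|\xi|^\alpha}$ lie in $L^1(\mathbb{R}^n)$, so their inverse Fourier transforms are continuous and bounded. The remaining point is decay at spatial infinity. The cleanest route, valid for $0<\alpha<2$, is the subordination formula $e^{-t|\xi|^\alpha}=\int_0^\infty e^{-s|\xi|^2}\eta^{\alpha/2}_t(s)\,ds$ against the density $\eta^{\alpha/2}_t$ of the $\alpha/2$-stable subordinator, which represents $G^\alpha_t$ as a superposition of Gaussians, yielding positivity, $\|G^\alpha_t\|_{L^1}=1$, and the pointwise bound $G^\alpha_1(x)\lesssim (1+|x|)^{-(n+\alpha)}$; the case $\alpha=2$ is the Gaussian with exponential decay. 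For $\Lambda^\nu G^\alpha_1$ one differentiates this representation, or argues directly on the oscillatory integral by integrating by parts on the region away from the origin, the non-smooth factor $|\xi|^\nu$ of the symbol at $\xi=0$ dictating the polynomial decay $(\Lambda^\nu G^\alpha_1)(x)\lesssim (1+|x|)^{-(n+\nu)}$. Since $n+\alpha>n$ and $n+\nu>n$, these tails are $r$-integrable for all $r\ge1$, and together with boundedness near the origin they give $G^\alpha_1,\Lambda^\nu G^\alpha_1\in L^r$ for all $1\le r\le\infty$, closing the argument. I expect the main obstacle to be making the decay estimates rigorous and uniform across the full range $\alpha\in(0,2]$ and $\nu>0$, in particular controlling the singularity of the symbol at the origin that governs the sharp decay exponent.
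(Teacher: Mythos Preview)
The paper does not give its own proof of this lemma: it is simply recalled from \cite{MYZ08}, Lemma~3.1, with no argument supplied. There is therefore nothing in the paper to compare your proposal against.

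Your outline is the standard route---scaling reduces to $t=1$, then Young's inequality with $1-\tfrac1r=\tfrac1p-\tfrac1q$, and finally $L^r$-integrability of the unit-time kernels---and it is correct. The only soft spot is the full range $\alpha>0$ asserted in the statement: your subordination representation is only available for $0<\alpha<2$ (plus the Gaussian case $\alpha=2$), and for $\alpha>2$ one must fall back on the oscillatory-integral/integration-by-parts decay argument you allude to at the end. Since the paper in fact only uses $\alpha\in(1,2]$, the subordination route you give already covers everything that is actually needed here.
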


\section{The proof of Theorem \ref{th1.1}}
We shall show Theorem \ref{th1.1} by applying the following existence and uniqueness result for an abstract operator equation in a generic Banach space (cf. \cite{L02}, Theorem 13.2).
\begin{proposition}\label{pro3.1}
Let $\mathcal{X}$ be a Banach space and
$\mathcal{B}:\mathcal{X}\times\mathcal{X}\rightarrow\mathcal{X}$  a
bilinear bounded operator. Assume that for any $u,v\in
\mathcal{X}$, we have
$$
  \|\mathcal{B}(u,v)\|_{\mathcal{X}}\leq
  C\|u\|_{\mathcal{X}}\|v\|_{\mathcal{X}}.
$$
Then for any $y\in \mathcal{X}$ such that $\|y\|_{\mathcal{X}}\leq
\eta<\frac{1}{4C}$, the equation $u=y+\mathcal{B}(u,u)$ has a
solution $u$ in $\mathcal{X}$. Moreover, this solution is the only
one such that $\|u\|_{\mathcal{X}}\leq 2\eta$, and depends
continuously on $y$ in the following sense: if
$\|\widetilde{y}\|_{\mathcal{X}}\leq \eta$,
$\widetilde{u}=\widetilde{y}+\mathcal{B}(\widetilde{u},\widetilde{u})$
and $\|\widetilde{u}\|_{\mathcal{X}}\leq 2\eta$, then
$$
  \|u-\widetilde{u}\|_{\mathcal{X}}\leq \frac{1}{1-4\eta
  C}\|y-\widetilde{y}\|_{\mathcal{X}}.
$$
\end{proposition}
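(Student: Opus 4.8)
The plan is to construct the solution by the classical Picard iteration and to extract it as the limit of a Cauchy sequence, rather than invoking any external fixed-point theorem; all the analytic input is already contained in the bilinear bound $\|\mathcal{B}(u,v)\|_{\mathcal{X}}\leq C\|u\|_{\mathcal{X}}\|v\|_{\mathcal{X}}$. Concretely, I would set $u_{0}:=y$ and define recursively $u_{n+1}:=y+\mathcal{B}(u_{n},u_{n})$ for $n\geq 0$. The first step is an a priori bound: assuming $\|y\|_{\mathcal{X}}\leq\eta<\frac{1}{4C}$, I claim $\|u_{n}\|_{\mathcal{X}}\leq 2\eta$ for every $n$. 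This follows by induction, since
\begin{equation*}
\|u_{n+1}\|_{\mathcal{X}}\leq\|y\|_{\mathcal{X}}+C\|u_{n}\|_{\mathcal{X}}^{2}\leq\eta+C(2\eta)^{2}=\eta+4C\eta^{2}\leq 2\eta,
\end{equation*}
where the last inequality uses $4C\eta\leq 1$.

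The second step is to show that $(u_{n})_{n}$ is a Cauchy sequence in $\mathcal{X}$. Writing $\mathcal{B}(u_{n},u_{n})-\mathcal{B}(u_{n-1},u_{n-1})=\mathcal{B}(u_{n}-u_{n-1},u_{n})+\mathcal{B}(u_{n-1},u_{n}-u_{n-1})$ and using bilinearity together with the a priori bound gives
\begin{equation*}
\|u_{n+1}-u_{n}\|_{\mathcal{X}}\leq C\big(\|u_{n}\|_{\mathcal{X}}+\|u_{n-1}\|_{\mathcal{X}}\big)\|u_{n}-u_{n-1}\|_{\mathcal{X}}\leq 4C\eta\,\|u_{n}-u_{n-1}\|_{\mathcal{X}},
\end{equation*}
and since $4C\eta<1$ this is a genuine contraction, so $\|u_{n+1}-u_{n}\|_{\mathcal{X}}\leq(4C\eta)^{n}\|u_{1}-u_{0}\|_{\mathcal{X}}$ and $\sum_{n}\|u_{n+1}-u_{n}\|_{\mathcal{X}}$ converges. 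By completeness of $\mathcal{X}$ we get $u_{n}\to u$ for some $u\in\mathcal{X}$ with $\|u\|_{\mathcal{X}}\leq 2\eta$; passing to the limit in the recursion (the bilinear operator being continuous, since bounded) yields $u=y+\mathcal{B}(u,u)$.

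For uniqueness in the ball of radius $2\eta$, suppose $u$ and $\widetilde{u}$ both solve the equation with norms $\leq 2\eta$; then $u-\widetilde{u}=\mathcal{B}(u,u)-\mathcal{B}(\widetilde{u},\widetilde{u})$, and the same splitting gives $\|u-\widetilde{u}\|_{\mathcal{X}}\leq 4C\eta\,\|u-\widetilde{u}\|_{\mathcal{X}}$, forcing $u=\widetilde{u}$ because $4C\eta<1$. For the Lipschitz dependence on the datum, subtracting $u=y+\mathcal{B}(u,u)$ and $\widetilde{u}=\widetilde{y}+\mathcal{B}(\widetilde{u},\widetilde{u})$ and using the same estimate yields $\|u-\widetilde{u}\|_{\mathcal{X}}\leq\|y-\widetilde{y}\|_{\mathcal{X}}+4C\eta\,\|u-\widetilde{u}\|_{\mathcal{X}}$, whence $\|u-\widetilde{u}\|_{\mathcal{X}}\leq\frac{1}{1-4C\eta}\|y-\widetilde{y}\|_{\mathcal{X}}$. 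I do not expect any serious obstacle: the whole argument is elementary once the bilinear bound is available, and the only point requiring a little care is the bookkeeping of constants, namely checking that the threshold $\eta<\frac{1}{4C}$ is exactly what makes both the a priori bound $\eta+4C\eta^{2}\leq 2\eta$ and the contraction factor $4C\eta<1$ work simultaneously.
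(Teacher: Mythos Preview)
Your argument is correct and is exactly the standard Picard iteration proof of this abstract fixed-point lemma. Note, however, that the paper does not actually prove Proposition~3.1: it simply quotes the statement and cites \cite{L02}, Theorem~13.2, where the proof given is essentially the same iteration you wrote down (define $u_{0}=y$, $u_{n+1}=y+\mathcal{B}(u_{n},u_{n})$, bound uniformly by $2\eta$, show the increments contract with factor $4C\eta<1$, and pass to the limit).
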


As a standard practice, we can reformulate the equation \eqref{eq1.1} into an equivalent integral equation by the Duhamel formula
\begin{equation}\label{eq3.1}
  \theta(t,x)=G^{\alpha}_{t}\ast \theta_{0}(x)+\int_{0}^{t}G^{\alpha}_{t-s}\ast f(s,x)ds-\int_{0}^{t}G^{\alpha}_{t-s}\ast(u\cdot\nabla\theta)(s,x)ds.
\end{equation}
For simplicity, we denote the bilinear term as
\begin{equation*}
  B(u,\theta):=\int_{0}^{t}G^{\alpha}_{t-s}\ast(u\cdot \nabla \theta)(s,x)ds.
\end{equation*}

Under the assumptions of Theorem \ref{th1.1}, we introduce the functional space $\mathcal{X}_{T}$ as
\begin{equation*}
   \mathcal{X}_{T}:=L^{q(\cdot)}(0,T; L^{p}(\mathbb{R}^2)),
\end{equation*}
where $T>0$ is a constant to be determined later. The space $\mathcal{X}_{T}$ is endowed with a Luxemburg norm as
\begin{equation*}
   \|f\|_{\mathcal{X}_{T}}=\inf\left\{\lambda>0: \int_{0}^{T}\left|\frac{\|\theta(t,\cdot)\|_{L^p}}{\lambda}\right|^{q(t)}dt\leq1\right\}.
\end{equation*}
Under this functional setting we will consider the Banach-Picard principle to construct mild solutions
for the integral equation \eqref{eq3.1}.
\smallbreak

We first establish the linear estimates of the integral equation \eqref{eq3.1} in the space $\mathcal{X}_{T}$.
\begin{lemma}\label{le3.2}
Let $\alpha\in(1,2]$, $q(\cdot)\in \mathcal{P}^{\log}(0,+\infty)$ with $2<q^{-}\leq q^{+}<+\infty$, fix an index $p>\frac{2}{\alpha-1}$ by the relationship $\frac{\alpha}{q(\cdot)}+\frac{2}{p}<\alpha-1$ and $\bar{p}(\cdot)\in\mathcal{P}^{\text{emb}}_{p}(\mathbb{R}^{2})$. Then for any $\theta_{0}\in L^{\bar{p}(\cdot)}(\mathbb{R}^{2})$,
there exists a positive constant $C$ such that
\begin{equation}\label{eq3.2}
   \|G^{\alpha}_{t}\ast \theta_{0}\|_{\mathcal{X}_{T}}\leq
   C\max\{T^{\frac{1}{q^{-}}}, T^{\frac{1}{q^{+}}}\}\|\theta_{0}\|_{L^{\bar{p}(\cdot)}_{x}}.
\end{equation}
\end{lemma}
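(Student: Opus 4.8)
The plan is to decouple the space and time variables: first obtain a bound for $\|G^{\alpha}_{t}\ast\theta_{0}\|_{L^{p}(\mathbb{R}^{2})}$ that is uniform in $t$, and then take the $L^{q(\cdot)}(0,T)$ norm of the resulting constant, which reduces to an elementary computation with the Luxemburg norm on the finite interval $(0,T)$.

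First I would use the hypothesis $\bar{p}(\cdot)\in\mathcal{P}^{\mathrm{emb}}_{p}(\mathbb{R}^{2})$ together with Lemma \ref{le2.7} to get $\theta_{0}\in L^{p}(\mathbb{R}^{2})$ with $\|\theta_{0}\|_{L^{p}}\leq C\|\theta_{0}\|_{L^{\bar{p}(\cdot)}}$. Applying \eqref{eq2.12} of Lemma \ref{le2.12} with the pair of exponents $(p,p)$ — so that the time weight is $t^{-\frac{2}{\alpha}(\frac{1}{p}-\frac{1}{p})}=1$ — gives, for every $t>0$,
\begin{equation*}
  \|G^{\alpha}_{t}\ast\theta_{0}\|_{L^{p}(\mathbb{R}^{2})}\leq C\|\theta_{0}\|_{L^{p}(\mathbb{R}^{2})}\leq C\|\theta_{0}\|_{L^{\bar{p}(\cdot)}(\mathbb{R}^{2})}=:M,
\end{equation*}
with $C$ independent of $t$ and $T$.

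Next, since the scalar function $t\mapsto g(t):=\|G^{\alpha}_{t}\ast\theta_{0}\|_{L^{p}}$ satisfies $0\leq g(t)\leq M$ on $(0,T)$, the solidity of the variable Lebesgue norm (immediate from the modular definition \eqref{eq2.2}) together with its homogeneity yields
\begin{equation*}
  \|G^{\alpha}_{t}\ast\theta_{0}\|_{\mathcal{X}_{T}}=\big\|g\big\|_{L^{q(\cdot)}(0,T)}\leq M\,\big\|\mathbf{1}_{(0,T)}\big\|_{L^{q(\cdot)}(0,T)}.
\end{equation*}
By \eqref{eq2.2} it then suffices to exhibit, for each $T>0$, a number $\lambda>0$ with $\int_{0}^{T}\lambda^{-q(t)}\,dt\leq1$ and $\lambda\leq\max\{T^{1/q^{-}},T^{1/q^{+}}\}$. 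If $T\geq1$ I take $\lambda=T^{1/q^{-}}\geq1$, so $\lambda^{-q(t)}\leq\lambda^{-q^{-}}$ and the integral is at most $T\lambda^{-q^{-}}=1$; if $T\leq1$ I take $\lambda=T^{1/q^{+}}\leq1$, so $\lambda^{-q(t)}=T^{-q(t)/q^{+}}\leq T^{-1}$ and the integral is again at most $1$. In either case $\|\mathbf{1}_{(0,T)}\|_{L^{q(\cdot)}(0,T)}\leq\max\{T^{1/q^{-}},T^{1/q^{+}}\}$, and combining with the previous display gives \eqref{eq3.2}.

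There is no real obstacle in this lemma; the only point deserving care is the passage from the uniform pointwise-in-$t$ bound to the $L^{q(\cdot)}(0,T)$ bound, that is, the explicit estimate of the Luxemburg norm of a constant on a finite interval, which is exactly where the two-sided factor $\max\{T^{1/q^{-}},T^{1/q^{+}}\}$ (rather than a single power of $T$) is forced. I would also note that neither the log-Hölder regularity of $q(\cdot)$ nor the constraints $q^{-}>2$ and $\frac{\alpha}{q(\cdot)}+\frac{2}{p}<\alpha-1$ are used here — only $2<q^{-}\leq q(t)\leq q^{+}<+\infty$ enters — since those hypotheses are reserved for the bilinear estimate.
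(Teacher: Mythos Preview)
Your proof is correct and follows essentially the same route as the paper: a uniform-in-$t$ bound $\|G^{\alpha}_{t}\ast\theta_{0}\|_{L^{p}}\leq C\|\theta_{0}\|_{L^{p}}$ from Lemma~\ref{le2.12}, the estimate $\|\mathbf{1}_{(0,T)}\|_{L^{q(\cdot)}(0,T)}\leq C\max\{T^{1/q^{-}},T^{1/q^{+}}\}$, and Lemma~\ref{le2.7} for the embedding $L^{\bar{p}(\cdot)}\hookrightarrow L^{p}$. The only cosmetic differences are that the paper applies Lemma~\ref{le2.7} last rather than first, invokes H\"older \eqref{eq2.3} where you use solidity and homogeneity, and simply quotes the bound on $\|1\|_{L^{q(\cdot)}(0,T)}$ (with a factor~$2$) whereas you supply the two-case argument explicitly.
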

\begin{proof}
First, we infer from Lemma \ref{le2.12} that
\begin{equation}\label{eq3.3}
   \|G^{\alpha}_{t}\ast \theta_{0}\|_{L^{p}_{x}}\leq
   \|G^{\alpha}_{t}(x)\|_{L^{1}_{x}}\|\theta_{0}\|_{L^{p}_{x}}=\|\theta_{0}\|_{L^{p}_{x}}.
\end{equation}
Then taking the $L^{q(\cdot)}$-norm to both sides of \eqref{eq3.3} with respect to the time variable $t$, and using the H\"{o}lder's inequality \eqref{eq2.3} yield that
\begin{equation}\label{eq3.4}
   \|G^{\alpha}_{t}\ast \theta_{0}\|_{L^{q(\cdot)}_{t}(L^{p}_{x})}\leq C\|1\|_{L^{q(\cdot)}_{t}}
   \|\theta_{0}\|_{L^{p}_{x}}\leq C\max\{T^{\frac{1}{q^{-}}}, T^{\frac{1}{q^{+}}}\}\|\theta_{0}\|_{L^{p}_{x}},
\end{equation}
where we used the following simple result in the variable Lebesgue spaces:
\begin{equation*}
   \|1\|_{L^{q(\cdot)}(0,T)}\leq
   2\max\{T^{\frac{1}{q^{-}}}, T^{\frac{1}{q^{+}}}\}.
\end{equation*}
Then by Lemma \ref{le2.7}, we conclude that
\begin{equation}\label{eq3.5}
   \|G^{\alpha}_{t}\ast \theta_{0}\|_{L^{q(\cdot)}_{t}(L^{p}_{x})}\leq  C\max\{T^{\frac{1}{q^{-}}}, T^{\frac{1}{q^{+}}}\}\|\theta_{0}\|_{L^{\bar{p}(\cdot)}_{x}}.
\end{equation}
We complete the proof of Lemma \ref{le3.2}.
\end{proof}
\smallbreak

\begin{lemma}\label{le3.3}
Let $\alpha\in(1,2]$, $q(\cdot)\in \mathcal{P}^{\log}(0,+\infty)$ with $2<q^{-}\leq q^{+}<+\infty$, fix an index $p>\frac{2}{\alpha-1}$ by the relationship $\frac{\alpha}{q(\cdot)}+\frac{2}{p}<\alpha-1$ and $\bar{p}(\cdot)\in\mathcal{P}^{\text{emb}}_{p}(\mathbb{R}^{2})$. Then for any $f\in L^{q(\cdot)}(0,+\infty; L^{\bar{p}(\cdot)}(\mathbb{R}^{2}))$, there exists a positive constant $C$ such that
\begin{equation}\label{eq3.6}
   \|\int_{0}^{t}G^{\alpha}_{t-s}\ast f(s,x)ds\|_{\mathcal{X}_{T}}\leq
   C\max\{T^{\frac{1}{q^{-}}}, T^{\frac{1}{q^{+}}}\}\|f\|_{L^{1}_{t}L^{\bar{p}(\cdot)}_{x}}.
\end{equation}
\end{lemma}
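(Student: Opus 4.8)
\textbf{Proof proposal for Lemma \ref{le3.3}.}

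The plan is to mimic the argument in Lemma \ref{le3.2}, but now the extra time integral in the Duhamel term must be handled before taking the $L^{q(\cdot)}$-norm in time. First I would fix $t\in(0,T)$ and estimate the spatial $L^{p}_{x}$-norm of the forcing term. By Minkowski's integral inequality in the space variable, followed by the $L^{p}$-$L^{p}$ bound \eqref{eq2.12} of Lemma \ref{le2.12} with $p=q$ (equivalently $\|G^{\alpha}_{t-s}\|_{L^{1}_{x}}=1$ via Young's inequality for the constant-exponent space, which remains valid in $L^{p}(\mathbb{R}^{2})$), one gets
\begin{equation*}
   \left\|\int_{0}^{t}G^{\alpha}_{t-s}\ast f(s,\cdot)\,ds\right\|_{L^{p}_{x}}
   \leq \int_{0}^{t}\left\|G^{\alpha}_{t-s}\ast f(s,\cdot)\right\|_{L^{p}_{x}}ds
   \leq \int_{0}^{t}\|f(s,\cdot)\|_{L^{p}_{x}}\,ds
   \leq \int_{0}^{T}\|f(s,\cdot)\|_{L^{p}_{x}}\,ds .
\end{equation*}
The crucial point is that the last quantity no longer depends on $t$: it is simply $\|f\|_{L^{1}_{t}L^{p}_{x}}$, a constant with respect to $t\in(0,T)$.

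Next I would take the $L^{q(\cdot)}$-norm in time of this uniform-in-$t$ bound. Since the right-hand side is constant in $t$, it factors out of the Luxemburg norm and we are left with $\|1\|_{L^{q(\cdot)}(0,T)}$ times $\|f\|_{L^{1}_{t}L^{p}_{x}}$. Then, exactly as in \eqref{eq3.4}, I invoke the elementary estimate $\|1\|_{L^{q(\cdot)}(0,T)}\leq 2\max\{T^{1/q^{-}},T^{1/q^{+}}\}$ to obtain
\begin{equation*}
   \left\|\int_{0}^{t}G^{\alpha}_{t-s}\ast f(s,x)\,ds\right\|_{L^{q(\cdot)}_{t}(L^{p}_{x})}
   \leq C\max\{T^{\frac{1}{q^{-}}},T^{\frac{1}{q^{+}}}\}\,\|f\|_{L^{1}_{t}L^{p}_{x}} .
\end{equation*}
Finally, applying the embedding Lemma \ref{le2.7}, which gives $\|f(s,\cdot)\|_{L^{p}_{x}}\leq C\|f(s,\cdot)\|_{L^{\bar{p}(\cdot)}_{x}}$ for $\bar{p}(\cdot)\in\mathcal{P}^{\text{emb}}_{p}(\mathbb{R}^{2})$, and integrating in $s$, one upgrades $\|f\|_{L^{1}_{t}L^{p}_{x}}$ to $\|f\|_{L^{1}_{t}L^{\bar{p}(\cdot)}_{x}}$, which is precisely \eqref{eq3.6}.

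I do not anticipate a genuine obstacle here; the lemma is essentially the "inhomogeneous" counterpart of Lemma \ref{le3.2}, and the only mild subtlety is that the time-convolution must be collapsed to an $L^{1}_{t}$-norm \emph{before} passing to the variable-exponent norm in time — doing it in the other order would force one to control a time-convolution directly inside $L^{q(\cdot)}_{t}$, where Young's inequality is unavailable (cf. Remark 2.3). The use of Minkowski's inequality to move the spatial norm inside the $s$-integral, together with the trivial $L^{1}_{x}$-mass one of the kernel $G^{\alpha}_{t-s}$, is what makes the time variable drop out cleanly. Note that the hypothesis $\frac{\alpha}{q(\cdot)}+\frac{2}{p}<\alpha-1$ is not actually needed for this particular estimate — it is listed only to keep the statement parallel with the surrounding lemmas and will be used later in the bilinear estimate.
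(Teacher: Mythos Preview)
Your proposal is correct and follows essentially the same route as the paper's proof: Minkowski's inequality plus $\|G^{\alpha}_{t-s}\|_{L^{1}_{x}}=1$ to reduce to $\|f\|_{L^{1}_{t}L^{p}_{x}}$, then $\|1\|_{L^{q(\cdot)}(0,T)}\leq 2\max\{T^{1/q^{-}},T^{1/q^{+}}\}$, then the embedding of Lemma~\ref{le2.7}. Your remark that the hypothesis $\frac{\alpha}{q(\cdot)}+\frac{2}{p}<\alpha-1$ is not actually used in this lemma is also accurate.
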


\begin{proof}
We start by the usual $L^q$-norm in the space variable and we obtain
\begin{equation*}
\left\|\int_{0}^{t}G^{\alpha}_{t-s}\ast f(s,x)ds\right\|_{L^{p}} \leq  \int_0^t\|G^{\alpha}_{t-s}(\cdot)\|_{L^1} \| f(s, \cdot) \|_{L^{p}}ds.
\end{equation*}
Then we can proceed the same lines as the proof of Lemma \ref{le3.2} to get
\begin{align}\label{le3.7}
\left\|\int_{0}^{t}G^{\alpha}_{t-s}\ast f(s,x)ds\right\|_{L^{q(\cdot)}_t(L^{p}_x)}
&\leq
C\left\|\| f \|_{L^1_t L^{\bar{p}(\cdot)}_x }\right\|_{L^{q(\cdot)}_t}\nonumber
\\
&\leq
 C\| f\|_{L^1_t L^{{\bar{p}(\cdot)}(\cdot) }_x}\|1\|_{L^{q(\cdot)}_{t}}\notag\nonumber
\\
&\leq
C\max\left\{T^{\frac{1}{q^{-}}}, T^{\frac{1}{q^{+}}}\right\}\| f\|_{L^1_t L^{\bar{p}(\cdot)  }_x}.
\end{align}
We complete the proof of Lemma \ref{le3.2}.
\end{proof}
\smallbreak

Finally, we establish the bilinear estimate of the integral equation \eqref{eq3.1} in the space $\mathcal{X}_{T}$.
\begin{lemma}\label{le3.4}
Let $\alpha\in(1,2]$, $q(\cdot)\in \mathcal{P}^{\log}(0,+\infty)$ with $2<q^{-}\leq q^{+}<+\infty$, and $\frac{2}{\alpha-1}<p<+\infty$ satisfying the relationship $\frac{\alpha}{q(\cdot)}+\frac{2}{p}<\alpha-1$. Then for any $T>0$,
there exists a positive constant $C$ such that
\begin{equation}\label{eq3.8}
   \|B(u,\theta)\|_{\mathcal{X}_{T}}\leq
   C (1+T)\|\theta\|_{\mathcal{X}_{T}}^{2}.
\end{equation}
\end{lemma}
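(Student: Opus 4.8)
The plan is to exploit the divergence-free structure of $u$, writing $u\cdot\nabla\theta=\operatorname{div}(u\theta)$, so that
\[
  B(u,\theta)(t)=\int_0^t\nabla G^\alpha_{t-s}\ast(u\theta)(s,\cdot)\,ds .
\]
First I would estimate the $L^p_x$-norm for fixed $t$. Writing $\partial_j=\mathcal R_j\Lambda$ and using the classical boundedness of $\mathcal R_j$ on $L^p(\mathbb R^2)$ (valid since $p>\tfrac{2}{\alpha-1}\ge2$), the term $\|\nabla G^\alpha_{t-s}\ast(u\theta)(s)\|_{L^p}$ is controlled by $\|\Lambda G^\alpha_{t-s}\ast(u\theta)(s)\|_{L^p}$, and then Lemma \ref{le2.12} (estimate \eqref{eq2.13} with $\nu=1$, $n=2$, and the exponent pair $(\tfrac p2,p)$, noting $\tfrac p2>1$) gives
\[
  \|\nabla G^\alpha_{t-s}\ast(u\theta)(s)\|_{L^p_x}\le C(t-s)^{-\sigma}\|(u\theta)(s)\|_{L^{p/2}_x},\qquad
  \sigma:=\frac1\alpha\Big(1+\frac2p\Big).
\]
The generalized Hölder inequality (Lemma \ref{le2.3}, here in the elementary form $\tfrac{1}{p/2}=\tfrac1p+\tfrac1p$) gives $\|(u\theta)(s)\|_{L^{p/2}_x}\le C\|u(s)\|_{L^p_x}\|\theta(s)\|_{L^p_x}$, and Lemma \ref{le2.9} (or the classical Riesz bound) gives $\|u(s)\|_{L^p_x}\le C\|\theta(s)\|_{L^p_x}$. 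Hence
\[
  \|B(u,\theta)(t)\|_{L^p_x}\le C\int_0^t(t-s)^{-\sigma}\|\theta(s)\|_{L^p_x}^2\,ds ,
\]
and the hypothesis $p>\tfrac{2}{\alpha-1}$ is precisely what makes $\sigma<1$, so the kernel $(t-s)^{-\sigma}$ is locally integrable in time.

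The second, and crucial, step is to take the $L^{q(\cdot)}$-norm in time of this convolution-type expression. Since Young's inequality is not available in variable Lebesgue spaces (cf. Remark 2.3), I would instead recognise the time integral as a one-dimensional Riesz potential: setting $g(s):=\|\theta(s)\|_{L^p_x}^2$ and extending $g$ by zero outside $(0,T)$, one has for $t\in(0,T)$ that $\int_0^t(t-s)^{-\sigma}g(s)\,ds\le C\,\mathcal I_{1-\sigma}(g)(t)$ (Definition \ref{de2.10} with $n=1$, $\beta=1-\sigma\in(0,1)$). I then apply Lemma \ref{le2.11} with target exponent $q(\cdot)$ and source exponent $r(\cdot)$ defined by $\tfrac1{r(\cdot)}=\tfrac1{q(\cdot)}+1-\sigma$; one checks $r(\cdot)\in\mathcal P^{\log}$ and $1<r^-\le r^+<\infty$, since $\tfrac1{r^-}=\tfrac1{q^-}+1-\sigma<1$ because $\tfrac1{q^-}<\tfrac12\le\tfrac1\alpha\le\sigma$ (using $q^->2$ and $\alpha\le2$), while the smallness requirement $1-\sigma<\tfrac1{r^+}=\tfrac1{q^+}+1-\sigma$ reduces to $0<\tfrac1{q^+}$. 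Thus $\big\|\mathcal I_{1-\sigma}(g)\big\|_{L^{q(\cdot)}(0,T)}\le C\|g\|_{L^{r(\cdot)}(0,T)}$.

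Finally, since $g=\|\theta(\cdot)\|_{L^p_x}^2\in L^{q(\cdot)/2}(0,T)$ with $\|g\|_{L^{q(\cdot)/2}(0,T)}=\|\theta\|_{\mathcal X_T}^2$, it remains to pass from $L^{q(\cdot)/2}$ to $L^{r(\cdot)}$ on the bounded interval $(0,T)$. This is exactly where the assumption $\tfrac{\alpha}{q(\cdot)}+\tfrac2p<\alpha-1$ enters: it is equivalent to $1-\sigma>\tfrac1{q(\cdot)}$, hence $r(\cdot)<q(\cdot)/2$ a.e., so Lemma \ref{le2.5} yields $\|g\|_{L^{r(\cdot)}(0,T)}\le(1+T)\|g\|_{L^{q(\cdot)/2}(0,T)}$. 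Chaining the three steps gives $\|B(u,\theta)\|_{\mathcal X_T}\le C(1+T)\|\theta\|_{\mathcal X_T}^2$, as claimed.

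The main obstacle is the time integration. A direct Hölder estimate in time with a constant exponent would be too wasteful and would force the stronger condition $\tfrac{2\alpha}{q^-}+\tfrac2p<\alpha-1$; only by casting the time convolution as a genuinely variable-exponent fractional integral and invoking Lemma \ref{le2.11} does one recover the estimate under the sharp hypothesis. Consequently, the delicate part of the write-up is the exponent bookkeeping — verifying that $r(\cdot)$ is an admissible log-Hölder exponent with $r^->1$, that $\beta=1-\sigma\in(0,\tfrac1{r^+})$, and that $r(\cdot)\le q(\cdot)/2$ on $(0,T)$ — which is precisely where the constraints $q^->2$, $\alpha\le2$, and $\tfrac{\alpha}{q(\cdot)}+\tfrac2p<\alpha-1$ get consumed.
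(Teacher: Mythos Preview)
Your argument is correct and follows the same overall strategy as the paper: the spatial $L^p$ bound producing the kernel $(t-s)^{-\sigma}$ with $\sigma=\tfrac1\alpha(1+\tfrac2p)$, the identification of the time integral as a one-dimensional Riesz potential $\mathcal I_{1-\sigma}$, the use of Lemma~\ref{le2.11} for its boundedness in variable Lebesgue spaces, and finally the embedding Lemma~\ref{le2.5} on $[0,T]$ to absorb the index mismatch at the cost of the factor $(1+T)$. Your exponent bookkeeping is right; in particular $r^->1$ follows from $q^->2$ and $\alpha\le2$, and $r(\cdot)\le q(\cdot)/2$ is exactly the hypothesis $\tfrac{\alpha}{q(\cdot)}+\tfrac2p<\alpha-1$.

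The one genuine organisational difference is that the paper does not apply $\mathcal I_{1-\sigma}$ directly to $g(s)=\|\theta(s,\cdot)\|_{L^p}^2$. Instead it passes to duality via the norm-conjugate formula (Lemma~\ref{le2.4}), uses Fubini to throw the Riesz potential onto the test function $\psi$, bounds $\|\mathcal I_{1-\sigma}(|\psi|)\|_{L^{\widetilde q(\cdot)}}$ by Lemma~\ref{le2.11} (with $\tfrac{2}{q(\cdot)}+\tfrac{1}{\widetilde q(\cdot)}=1$), and then invokes Lemma~\ref{le2.5} in the form $L^{q'(\cdot)}\hookrightarrow L^{r(\cdot)}$ for the test function. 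Your direct route avoids the duality detour entirely and is a bit cleaner, while the paper's dual version has the mild advantage that one never needs the identity $\|g\|_{L^{q(\cdot)/2}}=\|\theta\|_{\mathcal X_T}^2$ explicitly. Either way the substance is the same.
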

\begin{proof}
For any $0<t\leq T$, taking $L^{p}$-norm to the bilinear term $B(u,\theta)$ with respect to the space variable $x$,  and using Lemma \ref{le2.12} and the Riesz transforms are bounded in the space $L^{p}$, we obtain
\begin{align}\label{eq3.9}
   \|B(u,\theta)\|_{L^{p}_{x}}&\leq \int_{0}^{t}\|\nabla G^{\alpha}_{t-s}\ast(u\theta)(s,\cdot)\|_{L^{p}_{x}}ds\nonumber\\
   &\leq \int_{0}^{t}\frac{1}{(t-s)^{\frac{1}{\alpha}+\frac{2}{\alpha p}}}\|u\|_{L^{p}_{x}}\|\theta\|_{L^{p}_{x}}ds\nonumber\\
   &\leq \int_{0}^{t}\frac{1}{(t-s)^{\frac{1}{\alpha}+\frac{2}{\alpha p}}}\|\theta\|_{L^{p}_{x}}^{2}ds.
\end{align}
Then taking $L^{q(\cdot)}$-norm with respect to the time variable $t$ and using the norm conjugate formula \eqref{eq2.5}, we see that
\begin{align}\label{eq3.10}
   \|B(u,\theta)\|_{L^{q(\cdot)}_{t}(L^{p}_{x})}&\leq \left\|\int_{0}^{t}\frac{1}{(t-s)^{\frac{1}{\alpha}+\frac{2}{\alpha p}}}\|\theta\|_{L^{p}_{x}}^{2}ds\right\|_{L^{q(\cdot)}_{t}}\nonumber\\
   &\leq 2\sup_{\|\psi\|_{L^{q'(\cdot)}_{t}}\leq 1}\int_{0}^{T}\int_{0}^{t}\frac{|\psi(t)|}{|t-s|^{\frac{1}{\alpha}+\frac{2}{\alpha p}}}\|\theta(s,\cdot)\|_{L^{p}_{x}}^{2}dsdt\nonumber\\
   &=2\sup_{\|\psi\|_{L^{q'(\cdot)}_{t}}\leq 1}\int_{0}^{T}\int_{0}^{T}\frac{1_{\{0<s<t\}}|\psi(t)|}{|t-s|^{\frac{1}{\alpha}+\frac{2}{\alpha p}}}dt\|\theta(s,\cdot)\|_{L^{p}_{x}}^{2}ds.
\end{align}
In order to use the 1D Riesz potential formula \eqref{eq2.10}, we extend the function $\psi(t)$ by zero on $\mathbb{R}\setminus[0,T]$, we see that the right-hand side of \eqref{eq3.10} can be represented as
\begin{align}\label{eq3.11}
   &\sup_{\|\psi\|_{L^{q'(\cdot)}_{t}}\leq 1}\int_{0}^{T}\int_{0}^{T}\frac{1_{0<s<t}|\psi(t)|}{|t-s|^{\frac{1}{\alpha}+\frac{2}{\alpha p}}}dt\|\theta(s,\cdot)\|_{L^{p}_{x}}^{2}ds\nonumber\\
   &=\sup_{\|\psi\|_{L^{q'(\cdot)}_{t}}\leq 1}\int_{0}^{T}\left(\int_{-\infty}^{+\infty}\frac{|\psi(t)|}{|t-s|^{\frac{1}{\alpha}+\frac{2}{\alpha p}}}dt\right)\|\theta(s,\cdot)\|_{L^{p}_{x}}^{2}ds\nonumber\\
   &=\sup_{\|\psi\|_{L^{q'(\cdot)}_{t}}\leq 1}\int_{0}^{T}\mathcal{I}_{\beta}(|\psi|)\|\theta\|_{L^{p}_{x}}^{2}ds,
\end{align}
where $\beta=1-\frac{1}{\alpha}-\frac{2}{\alpha p}$. Furthermore, for the right-hand side of \eqref{eq3.11},
by using H\"{o}lder's inequality with $\frac{2}{q(x)}+\frac{1}{\widetilde{q}(x)}=1$ yields that
\begin{align}\label{eq3.12}
   \sup_{\|\psi\|_{L^{q'(\cdot)}_{t}}\leq 1}\int_{0}^{T}\mathcal{I}_{\beta}(|\psi|)\|\theta\|_{L^{p}_{x}}^{2}ds&\leq C\sup_{\|\psi\|_{L^{q'(\cdot)}_{t}}\leq 1}\|\mathcal{I}_{\beta}(|\psi|)\|_{L^{\widetilde{q}(\cdot)}_{t}}\|\theta\|_{L^{q(\cdot)}_{t}(L^{p}_{x})}^{2}\nonumber\\
   &\leq C\sup_{\|\psi\|_{L^{q'(\cdot)}_{t}}\leq 1}\|\psi\|_{L^{r(\cdot)}_{t}}\|\theta\|_{L^{q(\cdot)}_{t}(L^{p}_{x})}^{2},
\end{align}
where the above indices satisfy the relationship
$$
\frac{1}{\widetilde{q}(\cdot)}=\frac{1}{r(\cdot)}-(1-\frac{1}{\alpha}-\frac{2}{\alpha p}).
$$
 Since $\frac{1}{\widetilde{q}(x)}=1-\frac{2}{q(x)}$ and $\frac{1}{q'(x)}=1-\frac{1}{q(x)}$, we can deduce that $r(\cdot)<q'(\cdot)$ under the condition $\frac{\alpha}{q(\cdot)}+\frac{2}{p}<\alpha-1$. By using Lemma \ref{le2.5} with $r(\cdot)<q'(\cdot)$ and $\Omega=[0,T]$, we get
\begin{align}\label{eq3.13}
   \sup_{\|\psi\|_{L^{q'(\cdot)}_{t}}\leq 1}\|\psi\|_{L^{r(\cdot)}_{t}}\|\theta\|_{L^{q(\cdot)}_{t}(L^{p}_{x})}^{2}&\leq \sup_{\|\psi\|_{L^{q'(\cdot)}_{t}}\leq 1}\|\psi\|_{L^{q'(\cdot)}_{t}}\|\theta\|_{L^{q(\cdot)}_{t}(L^{p}_{x})}^{2}\nonumber\\
   &\leq C(1+T)\|\theta\|_{L^{q(\cdot)}_{t}(L^{p}_{x})}^{2}.
\end{align}
Finally, taking all above estimates \eqref{eq3.11}--\eqref{eq3.13} into \eqref{eq3.10}, we get \eqref{eq3.8}. We complete the proof of Lemma \ref{le3.4}.
\end{proof}
\smallbreak

Based on the desired linear and bilinear estimates obtained in Lemmas \ref{le3.2}, \ref{le3.3} and \ref{le3.4}, we know that,  for any $0<T<\infty$,  there exist two positive
constants $C_{1}$ and $C_{2}$ such that
\begin{align}\label{eq3.14}
  \|\theta\|_{\mathcal{X}_{T}}\leq
 C_{1}\max\{T^{\frac{1}{q^{-}}}, T^{\frac{1}{q^{+}}}\}(\|\theta_{0}\|_{L^{\bar{p}(\cdot)}_{x}}+\|f\|_{L^{1}_{t}L^{\bar{p}(\cdot)}_{x}})
  +C_{2}  (1+T)\|\theta\|_{\mathcal{X}_{T}}^{2}.
\end{align}
Hence, if we choose $T$ small enough such that
\begin{align*}
\|\theta_{0}\|_{L^{\bar{p}(\cdot)}_{x}}+\|f\|_{L^{1}_{t}L^{\bar{p}(\cdot)}_{x}}\leq \frac{1}{4C_{1}C_{2} (1+T)\max\{T^{\frac{1}{q^{-}}}, T^{\frac{1}{q^{+}}}\}},
\end{align*}
we know from Proposition \ref{pro3.1} that the equation \eqref{eq1.1} admits a unique solution $\theta\in \mathcal{X}_{T}$. The proof of Theorem \ref{th1.1} is achieved.

\section{The proof of Theorem \ref{th1.2}}
We begin by considering a multi-index $\gamma$ such that $|\gamma|=1$. Taking $D$ on the integral equation \eqref{eq3.1}, we can write

\begin{equation}\label{eq4.1}
  D\theta(t,x)=G^{\alpha}_{t}\ast D\theta_{0}(x)+\int_{0}^{t}G^{\alpha}_{t-s}\ast Df(s,x)ds- B(Du,\theta)- B(u,D\theta).
\end{equation}
Thus, in the following we will obtain an unique solution of \eqref{eq3.1} by
considering the Banach-Picard principle in the space  $\mathcal{X}_T= L^{q(\cdot)} ( 0,T; L^{p} (\mathbb{R}^2))$.
Thus, under the assumptions \eqref{eq1.5}, we can follow the similar arguments as in Lemmas \ref{le3.2} and  \ref{le3.3} to obtain that
\begin{equation}\label{eq4.2}
\|G^{\alpha}_{t}\ast D\theta_{0}\|_{\mathcal{X}_T}\leq C
\max\left\{T^{\frac{1}{q^{-}}}, T^{\frac{1}{q^{+}}}\right\}
 \|D\theta_0\|_{L^{p}_{x}}
\end{equation}
and
\begin{equation}\label{eq4.3}
\left\|\int_{0}^{t}G^{\alpha}_{t-s}\ast Df(s,x)ds\right\|_{\mathcal{X}_T}
\leq C \max\left\{T^{\frac{1}{q^{-}}}, T^{\frac{1}{q^+} } \right\}\| Df \|_{L^1_t(L^p_x)}.
\end{equation}
Moreover, in order to deal with the nonlinearities we can also follow the similar arguments as in Lemma \ref{le3.4} to get
 \begin{align}\label{eq4.4}
        &\| B(Du,\theta)\|_{\mathcal{X}_T} \leq C (1+T)\|Du\|_{\mathcal{X}_T}\|\theta\|_{\mathcal{X}_T}\leq C (1+T)\|D\theta\|_{\mathcal{X}_T}\|\theta\|_{\mathcal{X}_T};\\
        \label{eq4.5}
          &\| B(u,D\theta)\|_{\mathcal{X}_T} \leq C (1+T)\|u\|_{\mathcal{X}_T}\|D\theta\|_{\mathcal{X}_T}\leq C (1+T)\|\theta\|_{\mathcal{X}_T}\|D\theta\|_{\mathcal{X}_T}.
 \end{align}
Putting the above estimates \eqref{eq4.2}--\eqref{eq4.5} together,  we conclude that there exist $0<T<+\infty$ such that
if the initial data $\theta_{0}$ and the external force $f$ satisfy the condition:
 \begin{align*}
 \|D\theta_0\|_{L^{p}_{x}}+\|Df\|_{L^1_t(L^{p}_x)}\leq\frac{C}{(1+T)\max\left\{T^{\frac{1}{q^{-}}}, T^{\frac{1}{q^{+}}}\right\}},
 \end{align*}
we can apply Proposition \ref{pro3.1} to obtain an unique solution
$D \theta$ of the integral system \eqref{eq3.1}.
The uniqueness of the derivative implies that this solution coincide with the first-order derivative applied to the unique solution of Theorem  \ref{th1.1}.
By iterating this process we easily obtain that $D^\gamma\theta \in L^{q(\cdot)} \left( 0,T;L^{p} (\mathbb{R}^2) \right)$ for $|\gamma|=2,3,...,|\beta|$.
With this we conclude the proof of Theorem \ref{th1.2}.

\bigskip

\noindent \textbf{Acknowledgements.}
The authors declared that they have no conflict of interest. G. Vergara-Hermosilla was supported by the ANID postdoctoral program BCH 2022 (no. 74220003), also he thanks Pierre-Gilles Lemari\'{e}-Rieusset and Diego Chamorro for their helpful comments and advises.
J. Zhao was partially supported by the National Natural Science Foundation of China (no. 12361034) and
 the Natural Science Foundation of Shaanxi Province (no. 2022JM-034).


\end{document}